\title{Stability and error estimates for the variable step-size BDF2 method for linear and semilinear parabolic equations\thanks{This work was supported by a grant from the National Natural Science
Foundation of China (Grant No. 11771060). }}
\author{Wansheng Wang\thanks{Department of Mathematics, Shanghai Normal University, 200234, Shanghai, China ({\tt w.s.wang@163.com}).}
                \and Mengli Mao\thanks{Department of Mathematics, Shanghai Normal University, 200234, Shanghai, China.}
                \and Zheng Wang\thanks{Department of Mathematics, Shanghai Normal University, 200234, Shanghai, China.}
                }
\begin{document}

\maketitle

\begin{abstract}
In this paper stability and error estimates for time discretizations of linear and semilinear parabolic equations by the two-step backward differentiation formula (BDF2) method with variable step-sizes are derived. An affirmative answer is provided to the question: whether the upper bound of step-size ratios for the $l^\infty(0,T;H)$-stability of the BDF2 method for linear and semilinear parabolic equations is identical with the upper bound for the zero-stability. The $l^\infty(0,T;V)$-stability of the variable step-size BDF2 method is also established under more relaxed condition on the ratios of consecutive step-sizes. Based on these stability results, error estimates in several different norms are derived. To utilize the BDF method the trapezoidal method and the backward Euler scheme are employed to compute the starting value. For the latter choice, order reduction phenomenon of the constant step-size BDF2 method is observed theoretically and numerically in several norms. Numerical results also illustrate the effectiveness of the proposed method for linear and semilinear parabolic equations.
\end{abstract}

\begin{keywords}
Linear parabolic equations, semilinear parabolic equations, variable step-size BDF2 method, stability, error estimates
\end{keywords}

\begin{AMS}
65M12, 65M15, 65L06, 65J08
\end{AMS}

\pagestyle{myheadings} \thispagestyle{plain} \markboth{WANSHENG WANG, MENGLI MAO, AND ZHENG WANG}{VARIABLE STEP-SIZE BDF2 FOR PDE}

\section{Introduction} In this paper we shall study stability and error estimates for time discretizations by the two-step backward differentiation formula (BDF2) with variable step-sizes for linear parabolic partial differential equations (PDEs)
\begin{equation}\label{eq1.1}
\left\{
\begin{array}{lll}
u^{\prime}(t)+Au(t)+Bu(t)=f(t),\qquad t\in J:=(0,T],\\
u(0)=u^{0},\\
\end{array}
\right.
\end{equation}
and its semilinear extension, where $A$: $D(A)\rightarrow H$ is a positive definite, self-adjoint, linear operator on a Hilbert space $(H,(\cdot,\cdot))$ with domain $D(A)$ dense in $H$, the linear operator $B: D(A)\to H$ satisfies some structural assumptions; here the forcing term $f:[0,T]\to H$, and initial value $u^{0}\in H$.

Let the time interval $[0,T]$ for given $N\in \mathbb N$, $N\ge 2$, be partitioned via $0=t^0<t^1<\cdots<t^N=T$. Let $k_{n}=t^{n}-t^{n-1},~n=1,2,\dotsc, N$, be the time step-sizes which in general will be variable, and $J_{n}:=(t^{n-1},t^{n}],~n=1,2,\dotsc, N$. We set
\begin{eqnarray*}
r_n=\frac{k_n}{k_{n-1}},~~ n=2,3,\dotsc,N;\quad k_{\max}=\max\limits_{n=2,3,\dotsc,N}k_n;\quad r_{\max}=\max\limits_{n=2,\dotsc,N}r_n.
\end{eqnarray*}
Assuming we are given starting approximations $U^0$ and $U^1$, which is computed by the trapezoidal method or the backward Euler
scheme, we discretize (1.1) in time by the variable step-size BDF2, i.e., we define nodal approximations $U^n\in D(A)$ to the values $u^n:=u(t^n)$ of the exact solution $u$ to (1.1) as follows:
\begin{eqnarray}\label{eq2.5}
\bar\partial^2_BU^{n}+AU^{n}+BU^{n}=f^{n},\qquad n=2,3,\dotsc,N,
\end{eqnarray}
where $f^n:=f(t^n)$ and
\begin{eqnarray*}
\bar\partial^2_BU^{n}&:=&(1+s_n)\bar\partial^1_BU^n-s_n\bar\partial^1_BU^{n-1}\\
&=&\frac{1}{k_{n}}\left((1+s_n)U^{n}-(1+r_{n})U^{n-1}+r_{n}s_nU^{n-2}\right).
\end{eqnarray*}
Here $\bar\partial^1_BU^n$ and $s_n$ are defined by
$$\bar\partial^1_BU^n=\bar\partial U^n:=\frac{U^n-U^{n-1}}{k_n}, \quad {\hbox{and}}~~s_n:=\frac{r_n}{1+r_n}=\frac{k_n}{k_n+k_{n-1}},$$
respectively. For an equidistant partition with $k_n=k$, we have $s_n=\frac{1}{2}$ and the well-known formula
$$\bar\partial^2_BU^n=\frac{1}{k}\left(\frac{3}{2}U^n-2U^{n-1}+\frac{1}{2}U^{n-2}\right).$$

The BDF2 method is one of the most popular time-stepping methods and many studies have been conducted on the stability and error estimates for it. Because of its good stability property (the scheme is $G$-stable), the BDF2 method with constant step-size has been dealt with for various equations as, e.g., linear parabolic equations \cite{Thomee97,Auzinger10}, integro-differential equations \cite{McLean93}, jump-diffusion model in finance \cite{Almendral05}, the Navier-Stokes equations \cite{Girault81,Hill00,Emmrich04}. When the solutions of time dependent differential equations have different time scales, i.e., solutions rapidly varying in some regions of time while slowly changing in other regions, variable step-sizes are often essential to obtain computationally efficient, accurate results. Owing to these prominent advantages, the variable step-size BDF2 method has been successfully applied to partial integro-differential equations \cite{Wang19} and Cahn-Hilliard equation \cite{Chen19} recently. An important result that the variable step-size BDF2 method is zero-stable if the step-size ratios $r_n$ are less than $R_0=\sqrt{2}+1\approx 2.414$  has been independently proved by several authors \cite{Zlatev81,Grigorieff83,Crouzeix84}. And the value of $R_0$ cannot be improved when dealing with arbitrary variable step-sizes (see, for example, \cite{Grigorieff83,Calvo90,Calvo93}).

For the variable step-size BDF2 method applied to linear parabolic equations, Le Roux \cite{LeRoux82} derived stability and error bounds in the $l^\infty(J;H)$ norm by using spectral techniques under the step-size conditions
 \begin{eqnarray}\label{eq1.3}
 c k_{\max}\le k_j\le k_{\max},\qquad |r_n-1|\le C\frac{k_{n-1}}{1+|\log k_{n-1}|},
  \end{eqnarray}
with constants $c\in (0,1]$ and $C>0$. Palencia and Garc\'ia-Archilla \cite{Palencia93} studied linear parabolic equations in a Banach space setting and obtained that the ratios $r_n$ should be close to $1$ such as, e.g., in (\ref{eq1.3}) for the stability factor to be moderate. Grigorieff \cite{Grigorieff91,Grigorieff95} showed stability and optimal error estimates with smooth or non-smooth data under the assumption that the step-size ratios are less than $(\sqrt{3}+1)/2\approx 1.366$ in a Banach space setting. Becker improved the bound up to $(\sqrt{13}+2)/3\approx 1.868$ in Hilbert space \cite{Becker98} (see, also, \cite{Thomee97}) by testing (\ref{eq2.5}) with $U^n_\delta:=U^n+\delta k_n\bar\partial^1_BU^n$ for a specified constant $\delta$. Based on the same technique, i.e., testing (\ref{eq2.5}) with $U^n_\delta:=U^n+\delta k_n\bar\partial U^n$, Emmrich \cite{Emmrich05} extended the results to semilinear parabolic problems and further improved the bound to $1.910$ using a more general identity for $2(AU^n,U^n_\delta)$. Emmrich \cite{Emmrich09} also studied the stability and convergence of the variable step-size BDF2 method for nonlinear evolution problems governed by a monotone potential operator.

It is natural to ask what the upper bound of step-size ratios is and whether it is identical with the upper bound $R_0$ for the zero-stability. In this paper we will address this question and give an affirmative answer. We explore a new technique, which is very different from the one used by Becker \cite{Becker98}, Thom\'ee \cite{Thomee97} and Emmrich \cite{Emmrich05,Emmrich09}. We first test (\ref{eq2.5}) with $\bar\partial U^n$ to obtain $l^\infty(J;V)$-stability and $l^2(J;H,H)$-stability (their definitions will be introduced in Section 2). Then after we test (\ref{eq2.5}) with $U^n$, using $l^2(J;H,H)$-stability estimate, we obtain the usual stability result in the $l^\infty(J;H)$ norm under the sharp zero-stability condition on the ratios of consecutive step-sizes. Following the approach of Chen et. al. \cite{Chen19}, the $l^\infty(J;V)$ and $l^2(J;V)$-stabilities of the variable step-size BDF2 method are also established under a more relaxed assumption that the step-size ratios are less than $R_1=(3+\sqrt{17})/2\approx 3.561$.

It is well known that the method (\ref{eq2.5}) yields second order approximations $U^n$ to $u^n$ (in the $H$ norm) when the backward Euler method is used to compute the starting value $U^1$, since it is applied only once. This choice for $U^1$ is quite popular in the multistep methods for computations of parabolic equations. However, the error bounds derived in this paper based on the obtained stability results suggest that this is not the best choice for the constant step-size BDF2 method, since it will cause the reduction of the convergence order in the $l^\infty(J;V)$ and $l^2(J;H,H)$ norms. This will be discussed in detail in Section 4.

The rest of this paper is organized as follows. We start in Section 2 by introducing the necessary notation and recalling a lemma which will be used in the following analysis. The stability of the method in several norms under the condition that the step-size ratios are less than $R_0$ (or $R_1$) is proved in Section 3. Error estimates in different norms are derived in Section 4. Since our error estimates will depend on the first step error $U^1-u^1$, the error $U^1-u^1$ produced by the trapezoidal method or the backward Euler
scheme will be analyzed in this section too. Section 5 will extend the analysis to the semilinear case
\begin{equation}\label{eq1.1a}
\left\{
\begin{array}{lll}
u^{\prime}(t)+Au(t)=f(t,u),\qquad t\in J:=(0,T],\\
u(0)=u^{0},\\
\end{array}
\right.
\end{equation}
with some assumptions on the nonlinear operator $f(t,\cdot)$. Section 6 is devoted to numerical experiments, which confirm our theoretical results and illustrate the effectiveness of the proposed method for linear and semilinear parabolic equations. Section 7 contains a few concluding remarks.

\section{Variable step-size BDF2 method for linear parabolic equations}
\label{2}

Now we consider the variable two-step BDF method for solving (1.1). To do this, we first make some assumptions and introduce the necessary notation.

\subsection{Linear parabolic equations} Let $ V:=D(A^{\frac{1}{2}})$ and denote the norms in $H$ and $V$ by $|\cdot|$ and $ \|\cdot\|$, $ \|v\|=|A^{\frac{1}{2}}v|=(Av,v)^{\frac{1}{2}}$, respectively. Let $ V^{*}$ be the dual of $V$ ($V\subset H\subset V^{*}$), and denote by $\|\cdot\|_*$ the dual norm on $V^*$, $\|v\|_{*}=|A^{-\frac{1}{2}}v|=(v,A^{-1}v)^{\frac{1}{2}}$. We denote by $(\cdot,\cdot)$ the duality pairing between $V^*$ and $V$. We define a bilinear form $a(\cdot,\cdot):V\times V\to \mathbb R$ via $(Au,v)=a(u,v)$. For the linear operator $B$, we assume that \begin{eqnarray}\label{eq2.3}
 |Bu|\le \gamma(t)\|u\|,\qquad \forall u\in V,\quad t\in J,\end{eqnarray}
with a smooth nonnegative function $\gamma: J\to \mathbb R$. Let $\gamma=\max_{t\in J}\gamma(t)$. We may write the parabolic problem in variational form as
 \begin{eqnarray}\label{eq2.4a}
 (u_t,v)+a(u,v)+(Bu,v)=(f,v),\quad \forall v\in V,~~t\in J;\qquad u(0)=u^0.
 \end{eqnarray}
Emmrich in \cite{Emmrich05} has shown that for given $u^0\in H$ and $f\in L^2(J;V^*)$, problem (\ref{eq2.4a}) admits a unique solution $u\in L^2(J;V)\cap C([0,T];H)$ with $u^\prime\in L^2(J;V^*)$.

{\em Standard example.} Let $A$ and $B$ be defined by
\begin{eqnarray*}
Au:=-\sum_{i,j=1}^d\frac{\partial}{\partial x_i}\left(\alpha_{ij}\frac{\partial u}{\partial x_j}\right),\qquad Bu:=\sum_{j=1}^d\alpha_j\frac{\partial u}{\partial x_j}+\alpha_0u,
\end{eqnarray*}
respectively, where $\alpha_{ij},~\alpha_j,~\alpha_0$ are sufficiently smooth functions in $x\in \Omega$ with $\Omega$ being a bounded domain in $\mathbb R^d$ with sufficiently smooth boundary $\partial \Omega$. Let $V=H^1_0(\Omega)$ and $H=L^2(\Omega)$ be the usual Sobolev and Lebesgue space, respectively. Assume that $(\alpha_{ij})$ is symmetric and uniformly positive definite. Then the operators $A$ is a positive definite, self-adjoint, linear operator, and $B$ satisfies the condition (\ref{eq2.3}).

\subsection{Variable step-size BDF2 method for linear parabolic equations} For the method (\ref{eq2.5}) we need the starting values $U^0$ and $U^1$. We set $U^0:=u^0$ and perform an initial trapezoidal approximation to get $U^1$
\begin{eqnarray}\label{eq2.4}\bar{\partial}U^{1}+AU^{\frac{1}{2}}+BU^{\frac{1}{2}}=f^{\frac{1}{2}}
\end{eqnarray}
with $v^{n-\frac{1}{2}}=\frac{v^n+v^{n-1}}{2}$. Note that with $r_n:=0$, the two-step BDF formally degenerates to a backward Euler step. It is also easy to see that
\begin{equation}\label{eq2.6}
\begin{split}
\bar\partial^2_BU^{n}&=k_{n}\bar{\partial}^{2}U^{n}+\frac{1}{k_{n-1}+k_{n}}(U^{n}-U^{n-2})\\
&=k_{n}\bar{\partial}^{2}U^{n}+
s_n\bar{\partial}U^{n}+\frac{k_{n-1}}{k_{n-1}+k_{n}}\bar{\partial}U^{n-1}\\
&=k_{n}s_n\bar{\partial}^{2}U^{n}+\bar\partial U^n,
\end{split}
\end{equation}
where
\begin{eqnarray*}\bar{\partial}^{2}v^{n}:=\bar\partial\bar\partial v^n=\frac{1}{k_n}\left[\bar\partial v^n-\bar\partial v^{n-1}\right].
\end{eqnarray*}

With respect to the solvability of the time discrete problem, Emmrich has shown in \cite{Emmrich05} that for given $U^0,U^1\in H$ and $f^n\in l^2(J;V^*)$, the problem
\begin{eqnarray}\label{eq2.8}
 (\bar\partial^2_BU^n,v)+a(U^n,v)+(BU^n,v)=(f^n,v),\quad \forall v\in V
 \end{eqnarray}
 admits a unique solution. For the obtained solution sequence $\{U^j\}_{j=n_1}^{n_2}$, we define the $l^\infty(t^{n_1},t^{n_2};H)$, $l^\infty(t^{n_1},t^{n_2};V)$ and $l^2(t^{n_1},t^{n_2};V)$ norms as
 \begin{eqnarray}
 |U|_{l^\infty(t^{n_1},t^{n_2};H)}:=\max\limits_{n_1\le j\le n_2}|U^j|,\qquad  \|U\|_{l^\infty(t^{n_1},t^{n_2};V)}:=\max\limits_{n_1\le j\le n_2}\|U^j\|,
 \end{eqnarray}
 and
 \begin{eqnarray}
 \|U\|_{l^2(t^{n_1},t^{n_2};V)}:=\left(\sum\limits_{j=n_1}^{n_2}k_j\|U^j\|^2\right)^{1/2},
 \end{eqnarray}
 respectively. It is well known that they are the discrete counterparts of the $L^\infty(t^{n_1},t^{n_2};H)$, $L^\infty(t^{n_1},t^{n_2};V)$ and $L^2(t^{n_1},t^{n_2};V)$ norms, respectively.

{\em Remark.} [The choice for $U^1$]. The starting value $U^1$ can be also obtained by the backward Euler method
\begin{equation}\label{eq2.9}
\bar\partial U^1+AU^1+BU^1=f^1
\end{equation}
 with $U^0=u^0$. It is well known that the constant step-size BDF2 method (\ref{eq2.5}) with this initial approximation $U^1$ also yields second order approximations $U^n$ to $u^n$ in $H$ norm; see Corollary 4.6 in Section 4, or, \cite{Becker98,Thomee97}. However, we find that order reduction will be caused in the $l^\infty(J;V)$ and $l^2(J;H,H)$ norms when the starting value $U^1$ is obtained by the backward Euler method (\ref{eq2.9}). Here the $l^2(t^{n_1},t^{n_2};H,H)$ norm of a solution sequence $\{U^j\}_{j=n_1}^{n_2}$ with $U^j\in H$ is defined by
 \begin{eqnarray}
 \|U\|_{l^2(t^{n_1},t^{n_2};H,H)}:=\left(\sum\limits_{j=n_1}^{n_2-1}k_js_j\left|\bar\partial U^{j}\right|^2\right)^{1/2},
 \end{eqnarray}
 which is the discrete counterpart of $\int_{t^{n_1}}^{t^{n_2}}|u^\prime(t)|^2dt$ with nonuniform grid weights $s_j$.

 Because of the different choices for $U^1$ and $k_1$, we pay special attention to $k_1$ and set $k_{\max}=\max\limits_{n=2,3,\dotsc,N}k_n$.

In subsequent sections, by convention, we set $\sum_{j=m}^nx_j=0$ and $\prod_{j=m}^nx_j=1$ if $m>n$. We will use the identity
\begin{eqnarray}\label{eq2.11}
2(\kappa a-\varepsilon b)a=(2\kappa-\varepsilon)a^2-\varepsilon b^2+\varepsilon (a-b)^2,\quad \kappa,\varepsilon\ge 0.
\end{eqnarray}
We also need the following discrete Gronwall lemma proved in \cite{Emmrich05}.

\begin{lemma}[Discrete Gronwall lemma \cite{Emmrich05}]\label{lem2.3} Let $0\le \lambda<1$, and $a_n,~b_n,~g_n,~\lambda_n\ge 0$ with $\{g_n\}$ being monotonically increasing. Then \begin{eqnarray}
a_n+b_n\le \sum\limits_{j=\varpi}^{n-1}\lambda_ja_j+\lambda a_n+ g_n,\qquad n=\varpi,\varpi+1,\dotsc
\end{eqnarray}
implies for $n=\varpi,\varpi+1,\dotsc$
\begin{eqnarray*}
a_n+b_n\le \frac{g_n}{1-\lambda}\prod\limits_{j=\varpi}^{n-1}\left(1+\frac{\lambda_j}{1-\lambda}\right)\le \frac{g_n}{1-\lambda}\exp\left(\frac{1}{1-\lambda}\sum\limits_{j=\varpi}^{n-1}\lambda_j\right).
\end{eqnarray*}
\end{lemma}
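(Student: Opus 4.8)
The plan is to reduce the hypothesis to the classical form of the discrete Gronwall inequality and then proceed by induction on $n$. Setting $c_n := a_n + b_n$ and using $b_n \ge 0$, so that $a_j \le c_j$ for all $j$, the assumed inequality becomes
\[
c_n \le \sum_{j=\varpi}^{n-1}\lambda_j c_j + \lambda c_n + g_n, \qquad n = \varpi, \varpi+1, \dotsc.
\]
Since $0 \le \lambda < 1$, I would absorb $\lambda c_n$ into the left-hand side and divide by $1-\lambda$ to obtain
\[
c_n \le \sum_{j=\varpi}^{n-1}\mu_j c_j + h_n, \qquad \mu_j := \frac{\lambda_j}{1-\lambda}, \quad h_n := \frac{g_n}{1-\lambda},
\]
where $\mu_j \ge 0$ and $\{h_n\}$ is monotonically increasing because $\{g_n\}$ is. It then suffices to establish that this reduced inequality, holding for all $n \ge \varpi$, implies $c_n \le h_n \prod_{j=\varpi}^{n-1}(1+\mu_j)$; the asserted bound on $a_n+b_n$ follows immediately, and the final exponential estimate is obtained by applying $1+x \le e^{x}$ to each factor of the product.

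For the reduced statement I would argue by induction on $n$. At $n = \varpi$ the sum $\sum_{j=\varpi}^{\varpi-1}$ is empty and the product $\prod_{j=\varpi}^{\varpi-1}$ equals $1$ by the conventions fixed above, so $c_\varpi \le h_\varpi$ is precisely the hypothesis. Assuming $c_j \le h_j \prod_{i=\varpi}^{j-1}(1+\mu_i)$ for $\varpi \le j \le n-1$, substitution into the reduced inequality together with the monotonicity bound $h_j \le h_n$ gives
\[
c_n \le h_n\Bigl(1 + \sum_{j=\varpi}^{n-1}\mu_j \prod_{i=\varpi}^{j-1}(1+\mu_i)\Bigr).
\]
It then remains to invoke the telescoping identity $\sum_{j=\varpi}^{n-1}\mu_j \prod_{i=\varpi}^{j-1}(1+\mu_i) = \prod_{j=\varpi}^{n-1}(1+\mu_j) - 1$, itself established by a one-line induction on $n$, after which $c_n \le h_n \prod_{j=\varpi}^{n-1}(1+\mu_j)$ and the induction closes.

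I do not expect a genuine obstacle here; the argument is a standard manipulation and is, in essence, the one in \cite{Emmrich05}. The two points needing a little care are the replacement of $a_j$ by $c_j = a_j + b_j$ in the summand — legitimate since all quantities are nonnegative and only an upper bound is sought — and the role of the monotonicity of $\{g_n\}$, which is exactly what lets one pull $h_n$ out of the sum in the inductive step. If one prefers to avoid the telescoping identity, the bound $c_n \le h_n \prod_{j=\varpi}^{n-1}(1+\mu_j)$ can be proved directly by induction, using that the partial products $\prod_{j=\varpi}^{m}(1+\mu_j)$ form a nondecreasing sequence in $m$ and hence dominate all the inductive hypotheses simultaneously.
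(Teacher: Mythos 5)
Your argument is correct: the reduction to $c_n\le\sum_{j=\varpi}^{n-1}\mu_j c_j+h_n$ via $a_j\le c_j$ and absorption of $\lambda c_n$, the induction using the monotonicity of $\{g_n\}$ to pull $h_n$ out of the sum, and the telescoping identity $\sum_{j=\varpi}^{n-1}\mu_j\prod_{i=\varpi}^{j-1}(1+\mu_i)=\prod_{j=\varpi}^{n-1}(1+\mu_j)-1$ all check out, and the empty-sum/empty-product conventions you rely on are exactly those fixed in the paper. The paper itself gives no proof of this lemma (it is quoted from Emmrich's work), so there is nothing to compare against; your write-up is the standard argument that reference supplies.
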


\section{Stability analysis}
In this section we shall show stability of the variable step-size BDF2 method with $r_{\max}<R_0$ applied to linear parabolic equations (1.1).
As mentioned in Introduction, for the variable step-size BDF2 method applied to parabolic equations, the best known result is that it is stable in the $l^\infty(J;H)$ norm when the step-size ratios are less than $1.910$. To improve the bound to $R_0=\sqrt{2}+1$, the upper bound for the zero-stability, we first need the following stability results in the $l^2(J;H,H)$ norm. Additionally, since the $V$ norm is an energy norm, from a physical point of view, $l^\infty(J;V)$ stability is of utmost important.

\begin{theorem}[$l^\infty(J;V)$ and $l^2(J;H,H)$ stability under $R<R_0$]\label{lem3.1} Let $r_{\max}\le R$ with $1<R<R_0=\sqrt{2}+1\approx 2.414$. If there exists a constant $c_1\in (0,1)$ such that $k_{\max}$ satisfies
\begin{eqnarray}\label{eq7.2}
(4+2\sqrt{2})\gamma^2 k_{\max}\le c_1<1,
\end{eqnarray}
then we have, for any $n\ge 2$,
\begin{equation}\label{eq7.3}
\begin{split}
&k_n\left|\bar\partial U^n\right|^2
+\left\|U\right\|^2_{l^2(t^2,t^n;H,H)}+\|U\|^2_{l^\infty(t^2,t^n;V)}\\
\le&C_1\left(\sum\limits_{j=2}^{n}k_j\left|f^{j}\right|^2
+k_2s_2\left|\bar\partial U^{1}\right|^2+\|U^1\|^2\right),
\end{split}
\end{equation}
where
$$C_1=\frac{4+2\sqrt{2}}{1-c_1}\exp\left(\frac{4+2\sqrt{2}}{1-c_1}\sum\limits_{j=2}^n\gamma^2(t^j)k_j\right)\le \frac{4+2\sqrt{2}}{1-c_1}\exp\left(\frac{4+2\sqrt{2}}{1-c_1}\gamma^2t^n\right).$$
\end{theorem}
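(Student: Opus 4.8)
The plan is to test the scheme~\eqref{eq2.5} with $\bar\partial U^{n}$ --- rather than with $U^{n}$ or with $U^{n}+\delta k_{n}\bar\partial U^{n}$, as in the works cited above --- to turn the three resulting terms into telescoping energy differences plus nonnegative remainders, to sum over $n$, and to close with the discrete Gronwall lemma (Lemma~\ref{lem2.3}); the step-size ratio bound $r_{\max}\le R<R_{0}$ will be used exactly once, to secure a sign. Taking the inner product of~\eqref{eq2.5} with $\bar\partial U^{n}$ and multiplying by $2k_{n}$, I write $\bar\partial^{2}_{B}U^{n}=(1+s_{n})\bar\partial U^{n}-s_{n}\bar\partial U^{n-1}$ and apply the identity~\eqref{eq2.11} with $a=\bar\partial U^{n}$, $b=\bar\partial U^{n-1}$, $\kappa=1+s_{n}$, $\varepsilon=s_{n}$, which yields
\[
2k_{n}(\bar\partial^{2}_{B}U^{n},\bar\partial U^{n})=k_{n}(2+s_{n})|\bar\partial U^{n}|^{2}-k_{n}s_{n}|\bar\partial U^{n-1}|^{2}+k_{n}s_{n}|\bar\partial U^{n}-\bar\partial U^{n-1}|^{2}.
\]
For the elliptic term the symmetry identity $2a(U^{n},U^{n}-U^{n-1})=\|U^{n}\|^{2}-\|U^{n-1}\|^{2}+\|U^{n}-U^{n-1}\|^{2}$ gives $2k_{n}(AU^{n},\bar\partial U^{n})=\|U^{n}\|^{2}-\|U^{n-1}\|^{2}+k_{n}^{2}\|\bar\partial U^{n}\|^{2}$, while Cauchy--Schwarz with~\eqref{eq2.3} gives $2k_{n}(BU^{n},\bar\partial U^{n})\ge-2k_{n}\gamma(t^{n})\|U^{n}\|\,|\bar\partial U^{n}|$ and $2k_{n}(f^{n},\bar\partial U^{n})\le2k_{n}|f^{n}|\,|\bar\partial U^{n}|$.

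Summing the resulting inequality over $j=2,\dots,n$, the elliptic energy telescopes to $\|U^{n}\|^{2}-\|U^{1}\|^{2}$, and the first-difference terms telescope to
\[
k_{n}(2+s_{n})|\bar\partial U^{n}|^{2}+\sum_{j=2}^{n-1}\bigl(k_{j}(2+s_{j})-k_{j+1}s_{j+1}\bigr)|\bar\partial U^{j}|^{2}-k_{2}s_{2}|\bar\partial U^{1}|^{2}.
\]
Since $k_{j+1}s_{j+1}=r_{j+1}^{2}k_{j}/(1+r_{j+1})$, since $x\mapsto x^{2}/(1+x)$ is increasing on $(0,\infty)$, and since $R_{0}^{2}/(1+R_{0})=1+\tfrac{\sqrt2}{2}$, one gets, for $2\le j\le n-1$,
\[
k_{j}(2+s_{j})-k_{j+1}s_{j+1}\ \ge\ k_{j}s_{j}+\Bigl(2-\tfrac{R^{2}}{1+R}\Bigr)k_{j}\ \ge\ k_{j}s_{j}+\Bigl(1-\tfrac{\sqrt2}{2}\Bigr)k_{j},
\]
which is the sole point at which $R<R_{0}$ is invoked; the endpoint coefficient is $k_{n}(2+s_{n})\ge 2k_{n}$.

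I then split the two cross terms by Young's inequality so as to put exactly the available margin back onto the left, e.g.\ $2k_{j}\gamma(t^{j})\|U^{j}\|\,|\bar\partial U^{j}|\le\tfrac12\bigl(1-\tfrac{\sqrt2}{2}\bigr)k_{j}|\bar\partial U^{j}|^{2}+(4+2\sqrt2)\,k_{j}\gamma^{2}(t^{j})\|U^{j}\|^{2}$, and likewise for the $f^{j}$ term with the same split; the combined $\bigl(1-\tfrac{\sqrt2}{2}\bigr)k_{j}|\bar\partial U^{j}|^{2}$ is then absorbed into the coefficients above, leaving on the left $k_{j}s_{j}|\bar\partial U^{j}|^{2}$ for $2\le j\le n-1$, at least $k_{n}|\bar\partial U^{n}|^{2}$ at the endpoint, the telescoped $\|U^{n}\|^{2}$, and nonnegative remainders ($k_{j}s_{j}|\bar\partial U^{j}-\bar\partial U^{j-1}|^{2}$ and $k_{j}^{2}\|\bar\partial U^{j}\|^{2}$) that are simply dropped. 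The inequality acquires the shape $a_{n}+b_{n}\le g_{n}+\sum_{j=2}^{n-1}\lambda_{j}a_{j}+\lambda a_{n}$, with $a_{j}$ collecting $k_{j}|\bar\partial U^{j}|^{2}$, $\|U\|^{2}_{l^{2}(t^{2},t^{j};H,H)}$ and $\|U^{j}\|^{2}$ (so $a_{j}\ge\|U^{j}\|^{2}$), $\lambda_{j}=(4+2\sqrt2)\gamma^{2}(t^{j})k_{j}$, $g_{n}=k_{2}s_{2}|\bar\partial U^{1}|^{2}+\|U^{1}\|^{2}+(4+2\sqrt2)\sum_{j=2}^{n}k_{j}|f^{j}|^{2}$ monotonically increasing in $n$, and $\lambda=(4+2\sqrt2)\gamma^{2}k_{\max}\le c_{1}<1$ by~\eqref{eq7.2} --- this is exactly where~\eqref{eq7.2} is needed. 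Lemma~\ref{lem2.3} now gives the asserted estimate with a constant of the stated form $C_{1}$; running the same argument with $n$ replaced by each $m\le n$ (and using monotonicity of $g$) upgrades the lone term $\|U^{n}\|^{2}$ to $\|U\|^{2}_{l^{\infty}(t^{2},t^{n};V)}$, which is~\eqref{eq7.3}.

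The one delicate point I anticipate is the joint budgeting of the telescoped coefficient $k_{j}(2+s_{j})-k_{j+1}s_{j+1}$: it must simultaneously stay positive --- which forces precisely the zero-stability threshold, since $\sup_{0<r<R_{0}}r^{2}/(1+r)=1+\tfrac{\sqrt2}{2}$ --- retain a full $k_{j}s_{j}|\bar\partial U^{j}|^{2}$ so the $l^{2}(J;H,H)$-norm can be rebuilt, and still leave a Young margin $\bigl(1-\tfrac{\sqrt2}{2}\bigr)k_{j}$ whose reciprocal, $4+2\sqrt2$, is exactly the constant appearing in the standing hypothesis~\eqref{eq7.2}. Once this bookkeeping is in place the telescopings, the Young estimates, and the invocation of Lemma~\ref{lem2.3} are all routine.
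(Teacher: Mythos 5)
Your proposal is correct and follows essentially the same route as the paper's proof: testing with $2k_n\bar\partial U^n$, telescoping via the identity \eqref{eq2.11}, using $\sup_{0\le x\le R_0}x^2/(1+x)=1+\tfrac{\sqrt2}{2}$ to control $k_{j+1}s_{j+1}$, splitting the two cross terms with Young margin $\tfrac12\bigl(1-\tfrac{\sqrt2}{2}\bigr)$ each (the paper's $\epsilon=\tfrac12-\tfrac{\sqrt2}{4}$, whose reciprocal is $4+2\sqrt2$), and closing with Lemma \ref{lem2.3}. The bookkeeping you flag as delicate is exactly the paper's, and your accounting of where $R<R_0$ and \eqref{eq7.2} enter is accurate.
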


\begin{proof}
We first have from (\ref{eq2.5}) and (\ref{eq2.6})
\begin{eqnarray}\label{eq3.3a}
k_ns_n\bar\partial^2U^n+\bar\partial U^n+AU^n+BU^n=f^n.
\end{eqnarray}
Taking in (\ref{eq3.3a}) the inner product with $2k_n\bar\partial U^n$, using the relation (\ref{eq2.11}), we obtain, for any $\epsilon\in (0,1)$,
\begin{equation}\label{eq3.4a}
\begin{split}
&k_ns_n\left(\left|\bar\partial U^n\right|^2-\left|\bar\partial U^{n-1}\right|^2+\left|\bar\partial U^n-\bar\partial U^{n-1}\right|^2\right)+2k_n\left|\bar\partial U^{n}\right|^2\\
&+\|U^{n}\|^2-\|U^{n-1}\|^2+\|U^{n}- U^{n-1}\|^2\\
\le&2\gamma(t^n) k_n\|U^n\|\left|\bar\partial U^{n}\right|+2k_n\left|f^{n}\right|\left|\bar\partial U^n\right|\\
\le&\frac{1}{\epsilon}\gamma^2(t^n)k_n\|U^n\|^2+\epsilon k_n\left|\bar\partial U^{n}\right|^2+\frac{1}{\epsilon}k_n\left|f^{n}\right|^2+\epsilon k_n\left|\bar\partial U^n\right|^2.
\end{split}
\end{equation}
Summing up gives for $n=2,3,\dotsc,N$,
\begin{eqnarray}\label{eq7.8a}
&&[s_n+2(1-\epsilon)]k_n\left|\bar\partial U^n\right|^2
+\sum\limits_{j=2}^{n-1}\left(k_js_j+2(1-\epsilon)k_j-k_{j+1}s_{j+1}\right)\left|\bar\partial U^{j}\right|^2+\|U^{n}\|^2\nonumber\\
&\le&\frac{1}{\epsilon}\sum\limits_{j=2}^{n}\gamma^2(t^j)k_j\|U^j\|^2+\frac{1}{\epsilon}\sum\limits_{j=2}^{n}k_j\left|f^{j}\right|^2+k_2s_2\left|\bar\partial U^{1}\right|^2+\|U^1\|^2.
\end{eqnarray}
Now take $\epsilon=\frac{1}{2}-\frac{\sqrt{2}}{4}$ such that
$$\sup\limits_{0\le x\le R_0}\frac{x^2}{1+x}\le 2(1-\epsilon).$$
Then we get
\begin{equation}\label{eq7.8}
\begin{split}
&(s_n+1)k_n\left|\bar\partial U^n\right|^2
+\sum\limits_{j=2}^{n-1}k_js_j\left|\bar\partial U^{j}\right|^2+\|U^{n}\|^2\\
\le&(4+2\sqrt{2})\sum\limits_{j=2}^{n}\gamma^2(t^j)k_j\|U^j\|^2
+(4+2\sqrt{2})\sum\limits_{j=2}^{n}k_j\left|f^{j}\right|^2
+k_2s_2\left|\bar\partial U^{1}\right|^2+\|U^1\|^2.
\end{split}
\end{equation}
An application of Lemma \ref{lem2.3} leads to the desired result.
\end{proof}

Recently, the $l^\infty(J;V)$ stability of a linearly implicit stabilization BDF2 method with variable step-sizes has been established under the condition $R<R_1=(3+\sqrt{17})/2\approx 3.561$ for the Cahn-Hilliard equation in \cite{Chen19}. Following their approach, we can also improve the bound to $R_1=(3+\sqrt{17})/2\approx 3.561$ for the $l^\infty(J;V)$ stability of the variable step-size BDF2 method for the problem (1.1).

\begin{theorem}[$l^\infty(J;V)$ and $l^2(J;V)$ stability under $R<R_1$] Let $r_{\max}\le R$ with $1<R<R_1=\frac{3+\sqrt{17}}{2}$, and let $c_R>\max\{\frac{2+2R}{2+R},\frac{2+2R}{2+3R-R^2}\}$. If there exists a constant $c_1\in (0,1)$ such that $k_{\max}$ satisfies
\begin{eqnarray}\label{eq3.7a}
c_R\gamma^2 k_{\max}\le c_1<1,
\end{eqnarray}
then we have, for any $n\ge 2$,
\begin{equation}\label{eq3.8aa}
\begin{split}
\frac{R}{1+R}k_n\left|\bar\partial U^n\right|^2+\|U^{n}\|^2\le C_2\left(\frac{R}{1+R}k_{1}\left|\bar\partial U^{1}\right|^2+ \|U^{1}\|^2+c_R\sum\limits_{j=2}^nk_j\left|f^{j}\right|^2\right),
\end{split}
\end{equation}
and
\begin{equation}\label{eq3.3aa}
\begin{split}
\sum\limits_{j=2}^nk_j\|U^{j}\|^2\le C_2t^n\left(k_{1}\left|\bar\partial U^{1}\right|^2+ \|U^{1}\|^2+c_R\sum\limits_{j=2}^nk_j\left|f^{j}\right|^2\right),
\end{split}
\end{equation}
where
$$C_2=\frac{c_R}{1-c_1}\exp\left(\frac{c_R}{1-c_1}\sum\limits_{j=2}^n\gamma^2(t^j)k_j\right)\le \frac{c_R}{1-c_1}\exp\left(\frac{c_R}{1-c_1}\gamma^2t^n\right).$$
\end{theorem}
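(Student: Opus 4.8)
The plan is to follow the energy method of Theorem~\ref{lem3.1} and of Chen et al.\ \cite{Chen19}, but to test the scheme \emph{directly} rather than through the splitting \eqref{eq3.3a}, so that the quantity $\frac{R}{1+R}k_n|\bar\partial U^n|^2+\|U^n\|^2$ is what gets propagated and the threshold $R_1$ emerges of its own accord. Using $(1+s_n)U^n-(1+r_n)U^{n-1}+r_ns_nU^{n-2}=(1+s_n)(U^n-U^{n-1})-r_ns_n(U^{n-1}-U^{n-2})$, I would test \eqref{eq2.5} with $2(U^n-U^{n-1})=2k_n\bar\partial U^n$, apply the identity \eqref{eq2.11} with $a=U^n-U^{n-1}$, $b=U^{n-1}-U^{n-2}$, $\kappa=1+s_n$, $\varepsilon=r_ns_n$ to the leading term, and use $2a(U^n,U^n-U^{n-1})=\|U^n\|^2-\|U^{n-1}\|^2+\|U^n-U^{n-1}\|^2$. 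Noting $|U^n-U^{n-1}|^2=k_n^2|\bar\partial U^n|^2$ and $\frac{r_ns_n}{k_n}|U^{n-1}-U^{n-2}|^2=s_nk_{n-1}|\bar\partial U^{n-1}|^2$, and bounding the $B$- and $f$-contributions by Young's inequality, $2\gamma(t^n)k_n\|U^n\|\,|\bar\partial U^n|+2k_n|f^n|\,|\bar\partial U^n|\le c_R\gamma^2(t^n)k_n\|U^n\|^2+c_Rk_n|f^n|^2+\frac{2}{c_R}k_n|\bar\partial U^n|^2$ and moving $\frac{2}{c_R}k_n|\bar\partial U^n|^2$ to the left, one is led — after discarding the nonnegative terms $\frac{r_ns_n}{k_n}|U^n-2U^{n-1}+U^{n-2}|^2$ and $\|U^n-U^{n-1}\|^2$ — to
\[
\Bigl(2-\tfrac{2}{c_R}+s_n(2-r_n)\Bigr)k_n|\bar\partial U^n|^2-s_nk_{n-1}|\bar\partial U^{n-1}|^2+\|U^n\|^2-\|U^{n-1}\|^2\le c_R\gamma^2(t^n)k_n\|U^n\|^2+c_Rk_n|f^n|^2 .
\]

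Summing over $n=2,\dots,m$ and reindexing $\sum_{n=2}^{m}s_nk_{n-1}|\bar\partial U^{n-1}|^2=\sum_{i=1}^{m-1}s_{i+1}k_i|\bar\partial U^i|^2$, the coefficient multiplying $k_i|\bar\partial U^i|^2$ is $2-\frac{2}{c_R}+s_i(2-r_i)-s_{i+1}$ for $2\le i\le m-1$ and $2-\frac{2}{c_R}+s_m(2-r_m)$ for $i=m$. The heart of the argument — and the place where $R_1$ enters — is to verify that, under $r_{\max}\le R<R_1$ and for $c_R$ above the stated threshold, every interior coefficient is $\ge 0$ and the leading one is $\ge\frac{R}{1+R}$. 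Since $s_j(2-r_j)=\frac{r_j(2-r_j)}{1+r_j}$ has infimum over $r_j\in(0,R]$ equal to $0$ when $R\le 2$ and to $\frac{2R-R^2}{1+R}$ (attained at $r_j=R$) when $R>2$, while $s_{i+1}=\frac{r_{i+1}}{1+r_{i+1}}\le\frac{R}{1+R}$, the worst case of the interior coefficient is $2-\frac{2}{c_R}-\max\bigl\{\frac{R}{1+R},\frac{R(R-1)}{1+R}\bigr\}$; this is $\ge 0$ exactly when $c_R\ge\max\bigl\{\frac{2+2R}{2+R},\frac{2+2R}{2+3R-R^2}\bigr\}$, and such a finite $c_R$ exists iff $2+3R-R^2>0$, i.e.\ $R^2-3R-2<0$, i.e.\ $R<R_1=\frac{3+\sqrt{17}}{2}$; the same computation makes the leading coefficient $\ge\frac{R}{1+R}$. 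Dropping the nonnegative interior terms and using $s_2k_1\le\frac{R}{1+R}k_1$, one obtains $\frac{R}{1+R}k_m|\bar\partial U^m|^2+\|U^m\|^2\le\frac{R}{1+R}k_1|\bar\partial U^1|^2+\|U^1\|^2+c_R\sum_{n=2}^{m}\gamma^2(t^n)k_n\|U^n\|^2+c_R\sum_{n=2}^{m}k_n|f^n|^2$.

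It remains to apply the discrete Gronwall Lemma~\ref{lem2.3}: the term $c_R\gamma^2(t^m)k_m\|U^m\|^2$ is the ``$\lambda a_m$'' contribution with $\lambda=c_R\gamma^2(t^m)k_m\le c_R\gamma^2k_{\max}\le c_1<1$ by \eqref{eq3.7a}, the weights $\lambda_j=c_R\gamma^2(t^j)k_j$ ($2\le j\le m-1$) sum to at most $c_R\gamma^2t^n$, and $g_m=\frac{R}{1+R}k_1|\bar\partial U^1|^2+\|U^1\|^2+c_R\sum_{n=2}^{m}k_n|f^n|^2$ is monotone increasing; this yields \eqref{eq3.8aa} with $C_2$ as stated (absorbing, if needed, the harmless factor $c_R\ge 1$). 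Finally \eqref{eq3.3aa} is immediate: \eqref{eq3.8aa} bounds $\|U^j\|^2$ uniformly in $j$ by the right-hand side, so $\sum_{j=2}^{n}k_j\|U^j\|^2\le\bigl(\sum_{j=2}^{n}k_j\bigr)\max_{2\le j\le n}\|U^j\|^2\le t^n\,C_2\bigl(k_1|\bar\partial U^1|^2+\|U^1\|^2+c_R\sum k_j|f^j|^2\bigr)$, where one also uses $\frac{R}{1+R}<1$. The delicate step is thus the worst-case step-ratio estimate of the telescoped $|\bar\partial U|$-coefficients, which is exactly what locates the stability bound at $R_1$ and determines the admissible range of $c_R$.
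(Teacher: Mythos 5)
Your proposal is correct and follows essentially the same route as the paper: testing with $2k_n\bar\partial U^n$, the energy identity you obtain from \eqref{eq2.11} (with $\kappa=1+s_n$, $\varepsilon=r_ns_n$) reproduces exactly the paper's leading coefficient $\tfrac{2+4r_n-r_n^2}{1+r_n}=2+s_n(2-r_n)$ and the term $s_nk_{n-1}|\bar\partial U^{n-1}|^2$, your Young inequality with weight $c_R$ is the paper's with $\epsilon=1/c_R$, and your worst-case coefficient analysis is the paper's case split $r_n\le 2$ versus $2<r_n<R_1$ in only slightly different clothing. The remaining differences (summing first and inspecting telescoped coefficients rather than iterating the one-step recursion, and deriving the identity directly from \eqref{eq2.5} rather than via \eqref{eq3.3a}) are cosmetic.
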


\begin{proof}
Taking in (\ref{eq3.3a}) the inner product with $2k_n\bar\partial U^n$, we obtain, for any $\epsilon\in (0,1)$,
\begin{equation}
\begin{split}
&\frac{k_n(2+4r_n-r_n^2)}{1+r_n}|\bar\partial U^n|^2-\frac{k_n}{1+r_n}|\bar\partial U^{n-1}|^2+r_ns_n\frac{|U^n-2U^{n-1}+U^{n-2}|^2}{k_n}\\
&+\|U^{n}\|^2-\|U^{n-1}\|^2+\|U^{n}- U^{n-1}\|^2\\
\le&2\gamma(t^n) k_n\|U^n\|\left|\bar\partial U^{n}\right|+2k_n\left|f^{n}\right|\left|\bar\partial U^n\right|\\
\le&\frac{1}{\epsilon}\gamma^2(t^n)k_n\|U^n\|^2+\epsilon k_n\left|\bar\partial U^{n}\right|^2+\frac{1}{\epsilon}k_n\left|f^{n}\right|^2+\epsilon k_n\left|\bar\partial U^n\right|^2.
\end{split}
\end{equation}
Ignoring some of the positive terms on the left-hand side, we have
\begin{equation}
\begin{split}
&g(z,\epsilon)k_n\left|\bar\partial U^n\right|^2+\|U^{n}\|^2\\
\le&s_nk_{n-1}\left|\bar\partial U^{n-1}\right|^2+ \|U^{n-1}\|^2+\frac{1}{\epsilon}\gamma^2(t^n)k_n\|U^n\|^2+\frac{1}{\epsilon}k_n\left|f^{n}\right|^2,
\end{split}
\end{equation}
where $g(r_n,\epsilon)=\frac{2+4r_n-r_n^2}{1+r_n}-2\epsilon$. In the case $r_n\le 2$, noting that $g(r_n,\epsilon)\ge 2-2\epsilon$, we can take $0<\epsilon<1-\frac{R}{2(1+R)}$ such that
\begin{eqnarray}\label{eq3.11}
g(r_n,\epsilon)>\frac{R}{1+R}.
 \end{eqnarray}
 In the case $2<r_n<\frac{3+\sqrt{17}}{2}$, since $\frac{2+4r_n-r_n^2}{1+r_n}$ is a decreasing function, we take $0<\epsilon<\frac{2+4R-R^2}{2(1+R)}-\frac{R}{2(1+R)}=\frac{2+3R-R^2}{2(1+R)}$ such that (\ref{eq3.11}) holds. Thus in both cases, we have
\begin{equation}\label{eq3.12}
\begin{split}
&\frac{R}{1+R}k_n\left|\bar\partial U^n\right|^2+\|U^{n}\|^2\\
\le&\frac{R}{1+R}k_{n-1}\left|\bar\partial U^{n-1}\right|^2+ \|U^{n-1}\|^2+\frac{1}{\epsilon}\gamma^2(t^n)k_n\|U^n\|^2+\frac{1}{\epsilon}k_n\left|f^{n}\right|^2\\
\le&\frac{R}{1+R}k_{1}\left|\bar\partial U^{1}\right|^2+ \|U^{1}\|^2+\frac{1}{\epsilon}\sum\limits_{j=2}^n\gamma^2(t^j)k_j\|U^j\|^2+\frac{1}{\epsilon}\sum\limits_{j=2}^nk_j\left|f^{j}\right|^2,
\end{split}
\end{equation}
Apply {\sc Lemma \ref{lem2.3}} to (\ref{eq3.12}) to obtain the desired inequality \eqref{eq3.8aa}. The estimate \eqref{eq3.3aa} is a direct result of \eqref{eq3.8aa}.  This completes the proof.
\end{proof}

Now we use the $l^2(J;H,H)$ stability estimate (\ref{eq7.3}) in {\sc Theorem 3.1} to show the stability of the variable step-size BDF2 method in the $l^\infty(J;H)$ and $l^2(J;V)$ norms.

\begin{theorem}[$l^\infty(J;H)$ and $l^2(J;V)$ stability under $R<R_0$]\label{th1}
Let $r_{\max}\le R$ with $1<R<R_0=\sqrt{2}+1\approx 2.414$. If there exist constants $c_1$ and $c_2$ such that $k_{\max}$ satisfies \eqref{eq7.2} and
\begin{equation}\label{eq3.7} 2c_3\gamma^2k_{\max}\le c_2<1,
\end{equation}
where $c_3=\frac{(1+R)^2}{1+2R-R^2}$, then the following estimate holds for $n=2,3,\dotsc,N$:
\begin{equation}\label{eq3.1}
\begin{split}
|U|^{2}_{l^\infty(t^2,t^n;H)}+\|U\|^2_{l^2(t^2,t^n;V)}\le C f_U,
\end{split}
\end{equation}
with
$$f_U=|U^{1}|^2+k_{\max} \|U^1\|^2+(k_1^2+k_{\max}k_2)|\bar\partial U^{1}|^2+\sum\limits_{j=2}^nk_j\left(\|f^j\|_*^2+k_{\max}\left|f^{j}\right|^2\right).$$
Here, $C$ depends on $\gamma$, $c_i$, $i=1,2,3$, and $R$, $T$, $\Phi_n$ with $\Phi_n$ being defined by
$$\Phi_n:=\sum\limits_{j=2}^{n-2}[r_j-r_{j+2}]_+,\qquad [x]_+:=\frac{|x|+x}{2}.$$
\end{theorem}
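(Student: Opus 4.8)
The plan is to test the BDF2 scheme (\ref{eq2.5}) with $2k_nU^n$ rather than with $\bar\partial U^n$, and to control the resulting lower-order terms using the $l^2(J;H,H)$-stability estimate (\ref{eq7.3}) already established in Theorem~\ref{lem3.1}. Taking the inner product of $\bar\partial^2_BU^n+AU^n+BU^n=f^n$ with $2k_nU^n$ and using the telescoping identity for the BDF2 difference operator (the $G$-stability identity: there are nonnegative coefficients so that $2k_n(\bar\partial^2_BU^n,U^n)$ equals a difference $G_n-G_{n-1}$ of a quadratic form in $(U^n,U^{n-1})$ plus a nonnegative remainder, valid precisely when $r_n<R_0$), one obtains on the left $G_n-G_{n-1}$ plus $2k_n\|U^n\|^2$; this is the step where the sharp zero-stability bound $R_0=\sqrt2+1$ enters, since positivity of the relevant $2\times2$ quadratic form degenerates exactly at $r_n=R_0$. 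On the right-hand side, the consistency and $B$-terms are estimated by $2k_n(f^n,U^n)\le 2k_n\|f^n\|_*\|U^n\|$ and $2k_n|(BU^n,U^n)|\le 2k_n\gamma(t^n)\|U^n\|\,|U^n|$, and then split by Young's inequality so that the $\|U^n\|^2$ contributions are absorbed by the $2k_n\|U^n\|^2$ on the left (this uses the constant $c_3=(1+R)^2/(1+2R-R^2)$, which quantifies how much of the dissipation term survives after absorbing the $f^n$-part, and the smallness condition (\ref{eq3.7})), leaving a term of the form $C\sum k_j\gamma^2(t^j)|U^j|^2+C\sum k_j\|f^j\|_*^2$.

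The residual difficulty is that the quadratic form $G_n$ controls $|U^n|^2$ only together with $|U^n-U^{n-1}|^2$-type quantities, and the coefficient in front of $|U^{n-1}|^2$ inside $G_{n-1}$ depends on $r_{n-1}$ while the one subtracted at the next step depends on $r_n$; summing $G_n-G_{n-1}$ therefore does not telescope cleanly but produces boundary defect terms proportional to $[r_j-r_{j+2}]_+$. This is exactly why the constant $C$ in (\ref{eq3.1}) is allowed to depend on $\Phi_n=\sum_{j=2}^{n-2}[r_j-r_{j+2}]_+$: the plan is to bound $\sum_j (c_{j}-c_{j+1})|\bar\partial U^{j}|^2$-type leftovers by $\Phi_n\cdot \max_j k_j|\bar\partial U^j|^2$ and then invoke (\ref{eq7.3}) from Theorem~\ref{lem3.1}, which bounds precisely $k_j|\bar\partial U^j|^2$ in terms of the data $f_U$. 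I expect this bookkeeping of the non-telescoping remainder — isolating it, showing it is nonnegative-coefficient-summable, and feeding it into the discrete Gronwall lemma without circularity — to be the main obstacle; everything else is the by-now-standard energy argument.

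After these reductions the inequality has the Gronwall-ready form
\begin{equation*}
|U^n|^2+\sum_{j=2}^n k_j\|U^j\|^2\le C\,f_U+C\sum_{j=2}^{n-1}\gamma^2(t^j)k_j\,|U^j|^2,
\end{equation*}
with $f_U$ already incorporating $|U^1|^2$, $k_{\max}\|U^1\|^2$, $(k_1^2+k_{\max}k_2)|\bar\partial U^1|^2$ and the $f$-terms (the factors $k_1^2$ and $k_{\max}k_2$ arise because the initial data enters through both $G_1$ and, via Theorem~\ref{lem3.1}, through $k_2s_2|\bar\partial U^1|^2$ and $k_1|\bar\partial U^1|^2$). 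Applying the discrete Gronwall inequality of Lemma~\ref{lem2.3} with $a_n=|U^n|^2$, $b_n=\sum_{j}k_j\|U^j\|^2$, $\lambda_j=C\gamma^2(t^j)k_j$ and $g_n=Cf_U$ then yields (\ref{eq3.1}), the exponential factor being controlled by $\exp(C\gamma^2 t^n)$. Taking the maximum over $n$ on the left and noting that the right-hand side $Cf_U$ is $n$-monotone completes the proof; the dependence of $C$ on $\gamma,c_1,c_2,c_3,R,T,\Phi_n$ is then exactly as claimed.
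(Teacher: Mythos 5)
Your overall architecture (test the scheme with a multiple of $U^n$, control leftover difference terms via the $l^2(J;H,H)$ estimate of Theorem~\ref{lem3.1}, then apply the discrete Gronwall lemma) is the right one, but the key mechanism is misidentified in a way that leaves a genuine gap. You assert that for $r_n<R_0$ there is a $G$-stability identity $2k_n(\bar\partial^2_BU^n,U^n)=G_n-G_{n-1}+(\text{nonnegative remainder})$ with positive semidefinite $G_n$, "valid precisely when $r_n<R_0$". No such identity holds up to $R_0$: this is exactly why the best previously known ratio bounds obtained by direct energy/telescoping arguments (Becker, Emmrich) stop near $1.87$--$1.91$, and if your identity were true the theorem would follow without Theorem~\ref{lem3.1} at all. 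What the expansion of $\frac{2k_n}{1+r_n}(\bar\partial^2_BU^n,U^n)$ actually produces (identity (\ref{eq3.6})) is, besides the telescoping part and the nonnegative term $s_n^2|U^n-2U^{n-1}+U^{n-2}|^2$, the cross term $-2s_n^2|U^{n-1}-U^{n-2}|^2$; after summation the coefficient of $|U^j-U^{j-1}|^2$ is $1-2s_j^2-2s_{j+1}^2$, which becomes genuinely negative (tending to $-1$) as $r_j,r_{j+1}\to R_0$. It is these terms, $2s_j^2|U^j-U^{j-1}|^2=2k_j^2s_j^2|\bar\partial U^j|^2\le\sqrt2\,k_{\max}\sum_jk_js_j|\bar\partial U^j|^2$, that must be absorbed by invoking (\ref{eq7.3}); this is the actual role of Theorem~\ref{lem3.1} and the source of the $k_{\max}$-weighted contributions in $f_U$. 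You instead invoke (\ref{eq7.3}) to handle the $[r_j-r_{j+2}]_+$ non-telescoping defects; those in fact multiply $|U^j|^2$ (they arise from $\frac{1+2r_j}{(1+r_j)^2}-1+s_{j+2}^2=\frac{r_{j+2}^2}{(1+r_{j+2})^2}-\frac{r_j^2}{(1+r_j)^2}\ge-\frac{8}{27}[r_j-r_{j+2}]_+$) and are handled directly by the Gronwall lemma, contributing only the $\Phi_n$-dependence of $C$.

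A second gap: after summation the left-hand side carries $\frac{1+2R}{(1+R)^2}|U^n|^2-\frac{R^2}{(1+R)^2}|U^{n-1}|^2$, which is not yet in Gronwall-ready form. One needs the maximum argument (choose $n^*$ with $|U^{n^*}|=\max_{l\le n}|U^l|$, as in Emmrich's Theorem~3) together with $1+2R-R^2>0$, i.e.\ $R<\sqrt2+1$, to conclude $|U^n|^2+\sum_jk_j\|U^j\|^2\le c_3Q_n$ with $c_3=\frac{(1+R)^2}{1+2R-R^2}$. This step --- not the absorption of the $f^n$-term, as you suggest --- is where the sharp threshold $R_0$ and the constant $c_3$ actually enter; your proposal skips it by assuming the (false) clean telescoping.
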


\begin{proof} Taking in (\ref{eq2.5}) the inner product with $\frac{2k_n}{1+r_n}U^n$ yields
\begin{equation}\label{eq3.8a}
\begin{split}
&\frac{2k_n}{1+r_n}\left(\bar\partial^2_BU^{n},U^n\right)+\frac{2k_n}{1+r_n}a\left(U^{n},U^n\right)
+\frac{2k_n}{1+r_n}\left(BU^{n},U^n\right)\\
&=\frac{2k_n}{1+r_n}\left(f^{n},U^n\right),~~ n=2,3,\dotsc,N.
\end{split}
\end{equation}
By simple calculations, the first term on the left-hand side becomes
\begin{equation}\label{eq3.6}
\begin{split}
\frac{2k_{n}}{1+r_{n}}(\bar\partial^2_BU^{n},U^{n})
=&\frac{k_{n}}{1+r_{n}}\bar\partial^2_B|U^{n}|^{2}+\frac{1+2r_{n}}{(1+r_{n})^2}|U^{n}-U^{n-1}|^2     \\
&-s_{n}^2|U^{n-1}-U^{n-2}|^2-2s_{n}^2(U^{n}-U^{n-1},U^{n-1}-U^{n-2})   \\
=&\frac{k_{n}}{1+r_{n}}\bar\partial^2_B|U^{n}|^{2}+\frac{1+2r_{n}-r_n^2}{(1+r_{n})^2}|U^{n}-U^{n-1}|^2\\
&-2s_{n}^2|U^{n-1}-U^{n-2}|^2+s_n^2|U^n-2U^{n-1}+U^{n-2}|^2.
\end{split}
\end{equation}
Now
\begin{eqnarray}\label{eq3.10a}
2|(f^n,U^n)|\le 2\|f^n\|_*\|U^n\|
\le 2\|f^n\|^2_*+\frac{1}{2}\|U^n\|^2,
\end{eqnarray}
and, in view of (\ref{eq2.3}), \begin{eqnarray}\label{eq7.19}
2|(BU^n,U^n)|\le 2\gamma(t^n)\|U^n\||U^n|\le 2\gamma^2(t^n)|U^n|^2+\frac{1}{2}\|U^n\|^2.
\end{eqnarray}
Substitute (\ref{eq3.6}), (\ref{eq3.10a}) and (\ref{eq7.19}) into (\ref{eq3.8a}) to obtain
\begin{eqnarray}\label{eq3.4}
&&\frac{k_{n}}{1+r_{n}}\bar\partial^2_B|U^{n}|^{2}+\frac{1+2r_{n}-r_n^2}{(1+r_{n})^2}|U^{n}-U^{n-1}|^2
-2s_{n}^2|U^{n-1}-U^{n-2}|^2+\frac{k_n}{1+r_n}\|U^{n}\|^2\nonumber\\
&\le&\frac{2k_n}{1+r_{n}}\|f^n\|_*^2+\frac{2\gamma^2(t^n)k_n}{1+r_{n}}|U^{n}|^2.
\end{eqnarray}
By summation, we obtain
\begin{eqnarray}\label{eq7.15}
&& \sum\limits_{j=2}^n\frac{k_{j}}{1+r_{j}}\bar\partial^2_B|U^{j}|^{2}
+\frac{1+2r_{n}-r_n^2}{(1+r_{n})^2}|U^{n}-U^{n-1}|^2\nonumber\\
&&+ \sum\limits_{j=2}^{n-1}\left(\frac{1+2r_{j}-r_j^2}{(1+r_{j})^2}-2s^2_{j+1}\right)|U^{j}-U^{j-1}|^2
-2s^2_{2}|U^{1}-U^{0}|^2+\sum\limits_{j=2}^n\frac{k_j}{1+r_j}\|U^{j}\|^2\nonumber\\
&\le&2\sum\limits_{j=2}^n\frac{k_j}{1+r_{j}}\|f^j\|_*^2
+2\sum\limits_{j=2}^n\frac{\gamma^2(t^j)k_j}{1+r_{j}}|U^{j}|^2.
\end{eqnarray}
We first consider the first term of the left-hand side of (\ref{eq7.15}) and obtain
\begin{eqnarray}\label{eq3.14a}
\sum\limits_{j=2}^n\frac{k_{j}}{1+r_{j}}\bar\partial^2_B|U^{j}|^{2}
&=&\frac{1+2r_n}{(1+r_n)^2}|U^n|^2-s_{n-1}^2|U^{n-1}|^2\nonumber\\
&&+\sum\limits_{j=2}^{n-2}\left[\frac{1+2r_j}{(1+r_j)^2}-1+s_{j+2}^2\right]|U^{j}|^2  \nonumber\\
&& -\frac{1+2r_3}{(1+r_3)^2}|U^1|^2+s_2^2|U^0|^2.
\end{eqnarray}
Using the mean value theorem, we can easily verify that there holds for some $\overline{r}$ between $r_j$ and $r_{j+2}$,
\begin{eqnarray*}
\begin{split}
\frac{1+2r_j}{(1+r_j)^2}-1+s_{j+2}^2
&=\frac{r_{j+2}^2}{(1+r_{j+2})^2}-\frac{r_{j}^2}{(1+r_{j})^2} =\frac{2\bar r}{(1+\bar r)^3}(r_{j+2}-r_j).
\end{split}
\end{eqnarray*}
Noting that the nonnegative function  $ \frac{2r}{(1+r)^3}$ attains its maximum value $8/27$ at $r=1/2$, we obtain the lower bound
\begin{eqnarray}\label{eq3.15a}
\frac{1+2r_j}{(1+r_j)^2}-1+s_{j+2}^2\geq - \frac{8}{27}[r_{j}-r_{j+2}]_+.
\end{eqnarray}
Substituting (\ref{eq3.15a}) into (\ref{eq3.14a}) yields
\begin{equation}\label{eq7.16}
\begin{split}
\sum\limits_{j=2}^n\frac{k_{j}}{1+r_{j}}\bar\partial^2_B|U^{j}|^{2}\geq&\frac{1+2R}{(1+R)^2}|U^{n}|^{2}
-\frac{R^2}{(1+R)^2}|U^{n-1}|^2 -\frac{1+2r_3}{(1+r_3)^2}|U^1|^2    \\
&+s_2^2|U^0|^2-\sum\limits_{j=2}^{n-2}\frac{8}{27}[r_{j}-r_{j+2}]_+|U^j|^2.
\end{split}
\end{equation}

Since $r_{\max}< R_0$, we have $1+2r_{j}-r_j^2> 0$ and $s_{j}^2< \frac{1}{2}, j=2,\dotsc,N$. As a consequence, it holds that $$\frac{1+2r_{j}-r_j^2}{(1+r_{j})^2}-2s_{j+1}^2=1-2s_j^2-2s_{j+1}^2\ge -2s_j^2.$$
Taking $\frac{1+2r_3}{(1+r_3)^2}\le 1$ into account, from (\ref{eq7.16}) and (\ref{eq7.15}) we obtain
\begin{equation}\label{eq7.20}
\begin{split}
 &\frac{1+2R}{(1+R)^2}|U^{n}|^{2}
-\frac{R^2}{(1+R)^2}|U^{n-1}|^2+\sum\limits_{j=2}^n\frac{k_j}{1+r_j}\|U^j\|^2\\
\le& |U^{1}|^2+2s^2_2|U^{1}-U^{0}|^2+\frac{8}{27}\sum\limits_{j=2}^{n-2}[r_j-r_{j+2}]_+|U^j|^2 \\
&+2\sum\limits_{j=2}^n\frac{k_j}{1+r_{j}}\|f^j\|_*^2+2\sum\limits_{j=2}^{n-1}s^2_{j}|U^j-U^{j-1}|^2
+2\sum\limits_{j=2}^n\frac{\gamma^2(t^j)k_j}{1+r_{j}}|U^{j}|^2.
\end{split}
\end{equation}
Now using {\sc Theorem \ref{lem3.1}} and $s_j<\frac{\sqrt{2}}{2}$, we have
\begin{equation}\label{eq3.18a}
\begin{split}
2\sum\limits_{j=2}^{n-1}s^2_{j}|U^j-U^{j-1}|^2=&2\sum\limits_{j=2}^{n-1}k_j^2s^2_{j}|\bar\partial U^j|^2\le\sqrt{2} k_{\max}\sum\limits_{j=2}^{n-1}k_js_{j}|\bar\partial U^j|^2\\
\le & C_2k_{\max}\left(\sum\limits_{j=2}^{n}k_j\left|f^{j}\right|^2
+k_2s_2\left|\bar\partial U^{1}\right|^2+\|U^1\|^2\right).
\end{split}
\end{equation}
Substitute (\ref{eq3.18a}) into (\ref{eq7.20}) to obtain
\begin{equation}\label{eq3.27}
\begin{split}
 \frac{1+2R}{(1+R)^2}|U^{n}|^{2}+\frac{1}{1+R}\sum\limits_{j=2}^nk_j\|U^j\|^2\le \frac{R^2}{(1+R)^2}|U^{n-1}|^2+Q_n,
\end{split}
\end{equation}
where
\begin{eqnarray*}
Q_n&=&|U^{1}|^2+2s^2_2|U^{1}-U^{0}|^2+\frac{8}{27}\sum\limits_{j=2}^{n-2}[r_j-r_{j+2}]_+|U^j|^2 +2\sum\limits_{j=2}^n\frac{k_j}{1+r_{j}}\|f^j\|_*^2\\
&&
+2\sum\limits_{j=2}^n\frac{\gamma^2(t^j)k_j}{1+r_{j}}|U^{j}|^2+C_2k_{\max}\left(\sum\limits_{j=2}^{n}k_j\left|f^{j}\right|^2
+k_2s_2\left|\bar\partial U^{1}\right|^2+\|U^1\|^2\right).
\end{eqnarray*}

The remaining part of this proof is analogous with that of Theorem 3 in \cite{Emmrich05}. We first show
\begin{eqnarray}\label{eq3.28}
|U^{n}|^{2}+\sum\limits_{j=2}^nk_j\|U^j\|^2\le c_3Q_n.
\end{eqnarray}
To do this, let $n^*=n^*(n)$ be such that $|U^{n^*}|=\max\limits_{l=1,\dotsc,n}|U^l|$ for $n=2,3,\dotsc,N$. We first note that
for $n^*=1$,
\begin{eqnarray}\label{eq3.29}
|U^{n^*}|^2\le \frac{(1+R)^2}{1+2R-R^2}Q_n.
\end{eqnarray}
Now we show that (\ref{eq3.29}) is valid for $n^*\ge 2$. Due to $Q_{n^*}\le Q_n$, it follows from (\ref{eq3.27}) with $n=n^*$ that
\begin{equation}\label{eq3.30}
\frac{1+2R}{(1+R)^2}|U^{n^*}|^{2}
\le \frac{R^2}{(1+R)^2}|U^{n^*-1}|^2+Q_{n^*}\le \frac{R^2}{(1+R)^2}|U^{n^*-1}|^2+Q_{n}.
\end{equation}
Since $R<1+\sqrt{2}$, (\ref{eq3.30}) implies (\ref{eq3.29}). As a results of (\ref{eq3.29}), we have
\begin{equation}
\begin{split}
\frac{1+2R}{(1+R)^2}|U^{n}|^{2}+\frac{1}{1+R}\sum\limits_{j=2}^nk_j\|U^j\|^2
\le \left(\frac{R^2}{1+2R-R^2}+1\right)Q_n=\frac{1+2R}{1+2R-R^2}Q_n.
\end{split}
\end{equation}
Then (\ref{eq3.28}) follows from $1<R< R_0$.

Now when $k_{\max}$ satisfies $2c_3\gamma^2k_{\max}\le c_2<1$, an application of Lemma \ref{lem2.3} to (\ref{eq3.28}) results in
\begin{equation}
\begin{split}
|U^{n}|^{2}+\sum\limits_{j=2}^nk_j\|U^j\|^2\le \frac{c_3C_3}{1-c_2}E_n f_U,
\end{split}
\end{equation}
and therefore (\ref{eq3.1}), where $C_3=\max\{2,C_2\}$ and
\begin{equation*}
\begin{split}
E_n=&\left[1+\frac{2\gamma^2k_{n-1}}{1-c_2}\right]\prod\limits_{j=2}^{n-2}
\left[1+\frac{8c_2}{27}[r_j-r_{j+2}]_++\frac{2\gamma^2(t^j)k_{j}}{1-c_2}\right]\\
\le& \exp\left(\frac{2c_3}{1-c_2}\left(\frac{4}{27}\Phi_n+\sum\limits_{j=2}^n\gamma^2(t^j)k_j\right)\right).
\end{split}
\end{equation*}
This completes the proof of {\sc Theorem \ref{th1}}.
\end{proof}

For the case $f\in L^2(J;V^*)$, we have the following result.
\begin{theorem}[$l^\infty(J;H)$ and $l^2(J;V)$ stability, $f\in L^2(J;V^*)$]\label{th3.3}
Let $f\in L^2(J;V^*)$ and $r_{\max}\le R$ with $1<R<R_0=\sqrt{2}+1\approx 2.414$. If there exist constants $\tilde c_1$ and $c_2$ such that $k_{\max}$ satisfies
$$(2+\sqrt{2})\gamma^2k_{\max}\le \tilde c_1<1$$ and \eqref{eq3.7}, then the following estimate holds for $n=2,3,\dotsc,N$:
\begin{equation}
\begin{split}
|U|^{2}_{l^\infty(t^2,t^n;H)}+\|U\|^2_{l^2(t^2,t^n;V)}\le C \tilde f_U,
\end{split}
\end{equation}
with
$$\tilde f_U=|U^{1}|^2+k_{\max} \|U^1\|^2+(k_1^2+k_{\max}k_2)|\bar\partial U^{1}|^2+\sum\limits_{j=2}^n(k_{\max}+k_j)\|f^j\|_*^2.$$
Here, $C$ depends on $\gamma$, $\tilde c_1$, $c_i$, $i=2,3$, and $R$, $T$, $\Phi_n$.
\end{theorem}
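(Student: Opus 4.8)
The plan is to follow the proof of {\sc Theorem \ref{th1}} almost verbatim; the only place in that argument where the $H$-norm of the forcing enters is the bound \eqref{eq3.18a}, which invokes the $l^2(J;H,H)$-estimate \eqref{eq7.3} of {\sc Theorem \ref{lem3.1}} and thereby produces the term $k_{\max}\sum_j k_j|f^j|^2$ in $f_U$. Since here $f^j$ is only assumed to lie in $V^*$, the single new ingredient is a $V^*$-variant of {\sc Theorem \ref{lem3.1}} in which $\sum_j k_j|f^j|^2$ is replaced by $\sum_j\|f^j\|_*^2$.

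To establish this variant I would test \eqref{eq3.3a} with $2k_n\bar\partial U^n$ exactly as in the proof of {\sc Theorem \ref{lem3.1}}, keeping on the left-hand side the terms coming from $k_ns_n\bar\partial^2U^n$, from $\bar\partial U^n$ (which gives $2k_n|\bar\partial U^n|^2$), and from $AU^n$ (which gives $\|U^n\|^2-\|U^{n-1}\|^2+\|U^n-U^{n-1}\|^2$). The forcing term is now rewritten through the duality pairing as $2k_n(f^n,\bar\partial U^n)=2(f^n,U^n-U^{n-1})\le\eta\|U^n-U^{n-1}\|^2+\tfrac1\eta\|f^n\|_*^2$ with a fixed $\eta\in(0,1)$, and $\eta\|U^n-U^{n-1}\|^2$ is absorbed into the $\|U^n-U^{n-1}\|^2$ already present on the left. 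The term $2(BU^n,\bar\partial U^n)$ is treated by \eqref{eq2.3} and Young's inequality as $\tfrac1{\epsilon_1}\gamma^2(t^n)k_n\|U^n\|^2+\epsilon_1 k_n|\bar\partial U^n|^2$. Since the forcing no longer consumes any of the coefficient of $k_n|\bar\partial U^n|^2$, after summing over $n$ the coefficient of each $k_j|\bar\partial U^j|^2$ is (up to the nonnegative $k_js_j$) equal to $k_j((2-\epsilon_1)-\tfrac{r_{j+1}^2}{1+r_{j+1}})$, which stays nonnegative for all $r_j\le R<R_0$ provided $2-\epsilon_1\ge\sup_{0\le x\le R_0}\tfrac{x^2}{1+x}=1+\tfrac{\sqrt2}2$; one may therefore take $\epsilon_1=1-\tfrac{\sqrt2}2$, i.e. $\tfrac1{\epsilon_1}=2+\sqrt2$. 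Discarding the nonnegative $(1-\eta)\sum_j\|U^j-U^{j-1}\|^2$ and applying {\sc Lemma \ref{lem2.3}} under the hypothesis $(2+\sqrt2)\gamma^2k_{\max}\le\tilde c_1<1$ then gives
\begin{equation*}
k_n|\bar\partial U^n|^2+\|U\|^2_{l^2(t^2,t^n;H,H)}+\|U\|^2_{l^\infty(t^2,t^n;V)}\le\tilde C_1\left(\sum_{j=2}^n\|f^j\|_*^2+k_2s_2|\bar\partial U^1|^2+\|U^1\|^2\right),
\end{equation*}
with $\tilde C_1$ of the same shape as $C_1$ in {\sc Theorem \ref{lem3.1}}, with $4+2\sqrt2$ replaced by $2+\sqrt2$.

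With this in hand I would reproduce the proof of {\sc Theorem \ref{th1}} unchanged up to \eqref{eq7.20}: test \eqref{eq2.5} with $\tfrac{2k_n}{1+r_n}U^n$, use \eqref{eq2.11} and the identity \eqref{eq3.6}, bound the forcing by $2\|f^n\|_*^2+\tfrac12\|U^n\|^2$ via \eqref{eq3.10a} and the $B$-term by \eqref{eq7.19}, and run the telescoping \eqref{eq3.14a}, the mean-value lower bound \eqref{eq3.15a}, and the elementary facts $1+2r_j-r_j^2>0$, $s_j^2<\tfrac12$. The only modified step is \eqref{eq3.18a}, where the $V^*$-variant gives
\begin{equation*}
2\sum_{j=2}^{n-1}s_j^2|U^j-U^{j-1}|^2\le\sqrt2\,k_{\max}\|U\|^2_{l^2(t^2,t^n;H,H)}\le\sqrt2\,\tilde C_1k_{\max}\left(\sum_{j=2}^n\|f^j\|_*^2+k_2s_2|\bar\partial U^1|^2+\|U^1\|^2\right),
\end{equation*}
so that the quantity $Q_n$ in that proof now carries $k_{\max}\sum_j\|f^j\|_*^2$ instead of $k_{\max}\sum_j k_j|f^j|^2$. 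The rest --- the choice of $n^*$ with $|U^{n^*}|=\max_{1\le l\le n}|U^l|$, the resulting bound $|U^n|^2+\sum_j k_j\|U^j\|^2\le c_3Q_n$ with $c_3=\tfrac{(1+R)^2}{1+2R-R^2}$ (valid since $R<R_0$), and the concluding application of {\sc Lemma \ref{lem2.3}} under \eqref{eq3.7} --- is identical and yields the asserted estimate with $\tilde f_U$.

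The only genuinely new point, and the step to check with care, is the $V^*$-variant of {\sc Theorem \ref{lem3.1}}: one has to verify that, once $\eta\|U^n-U^{n-1}\|^2$ has been moved to the left, the telescoped coefficients of $k_j|\bar\partial U^j|^2$ remain nonnegative for every $r_j\le R<R_0$. This is exactly where the sharp threshold $\sup_{0\le x\le R_0}\tfrac{x^2}{1+x}=1+\tfrac{\sqrt2}2$ re-enters, now competing with the coefficient $2$ rather than with $2(1-\epsilon)$ as in {\sc Theorem \ref{lem3.1}}; this is what yields the improved constant $2+\sqrt2$ and the correspondingly weaker smallness requirement $(2+\sqrt2)\gamma^2k_{\max}\le\tilde c_1$. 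Beyond this the argument is bookkeeping, the point being that the duality estimate $|(f^n,v)|\le\|f^n\|_*\|v\|$ pairs naturally with the term $\|U^n-U^{n-1}\|^2$ already produced by the energy identity, so no structure from {\sc Theorem \ref{th1}} is lost.
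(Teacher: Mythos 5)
Your proposal is correct and takes essentially the same route as the paper: the paper likewise reworks the first-stage energy estimate by pairing the forcing in duality with $U^n-U^{n-1}$ (it takes your $\eta=1$, so the $\|U^n-U^{n-1}\|^2$ term cancels exactly), arrives at the same threshold $\epsilon=1-\tfrac{\sqrt{2}}{2}$ and hence the constant $2+\sqrt{2}$, and then declares the remainder ``analogous with that of Theorem~\ref{th1}.'' Your identification of the one genuinely new point --- that the forcing no longer consumes any of the coefficient of $k_n|\bar\partial U^n|^2$, which is exactly what relaxes the restriction to $(2+\sqrt{2})\gamma^2k_{\max}\le\tilde c_1$ --- matches the paper's argument.
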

\begin{proof} When $f\in L^2(0,T;V^*)$, we can obtain a similar inequality to (\ref{eq3.4a})
\begin{equation}
\begin{split}
&k_ns_n\left(\left|\bar\partial U^n\right|^2-\left|\bar\partial U^{n-1}\right|^2+\left|\bar\partial U^n-\bar\partial U^{n-1}\right|^2\right)+2k_n\left|\bar\partial U^{n}\right|^2\\
&+\|U^{n}\|^2-\|U^{n-1}\|^2+\|U^{n}- U^{n-1}\|^2\\
\le&2\gamma(t^n) k_n\|U^n\|\left|\bar\partial U^{n}\right|+2k_n\left\|f^{n}\right\|_*\left\|\bar\partial U^n\right\|\\
\le&\frac{1}{\epsilon}\gamma^2(t^n)k_n\|U^n\|^2+\epsilon k_n\left|\bar\partial U^{n}\right|^2+\left\|f^{n}\right\|_*^2+ \|U^{n}- U^{n-1}\|^2.
\end{split}
\end{equation}
Then we have an estimate similar to (\ref{eq7.3})
\begin{equation}
\begin{split}
&k_n\left|\bar\partial U^n\right|^2
+\left\|U\right\|^2_{l^2(t^2,t^n;H,H)}+\|U\|^2_{l^\infty(t^2,t^n;V)}\\
\le&\tilde C_1\left(\sum\limits_{j=2}^{n}\left\|f^{j}\right\|_*^2
+k_2s_2\left|\bar\partial U^{1}\right|^2+\|U^1\|^2\right),
\end{split}
\end{equation}
where
$$\tilde C_1=\frac{2+\sqrt{2}}{1-\tilde c_1}\exp\left(\frac{2+\sqrt{2}}{1-\tilde c_1}\sum\limits_{j=2}^n\gamma^2(t^j)k_j\right)\le \frac{2+\sqrt{2}}{1-\tilde c_1}\exp\left(\frac{2+\sqrt{2}}{1-\tilde c_1}\gamma^2t^n\right).$$

The remaining part of this proof is analogous with that of Theorem \ref{th1} and so is omitted.
\end{proof}

We note that under the condition $R<R_1$, we cannot currently obtain the $l^\infty(J;H)$ and $l^2(J;V)$ stability of the variable step-size BDF2 method when $f\in L^2(J;V^*)$.

We conclude this section with a few remarks about our stability results.

Our first remark is that to avoid the complication arising from the term $s_n^2|U^n-2U^{n-1}+U^{n-2}|^2$ in (\ref{eq3.6}), we cancel this term in the proof of Theorem \ref{th1}. Following the proof of Theorem \ref{th1}, we can obtain, for $n=2,3,\dotsc,N$,
\begin{eqnarray*}
&&|U|^{2}_{l^\infty(0,t^n;H)}+\|U\|^2_{l^2(0,t^n;V)}+\sum\limits_{j=2}^ns_j^2|U^j-2U^{j-1}+U^{j-2}|^2\\
&\le& \begin{cases}
C f_U,~~ {\hbox{if}}~f\in L^2(J;H);\\
C \tilde f_U,~~ {\hbox{if}}~f\in L^2(J;V^*).
\end{cases}
\end{eqnarray*}

It is noteworthy that the larger the admissible step-size $k_{\max}$, the smaller the admissible step-size ratio $r_{\max}$ will be. Especially, $k_{\max}\to 0$ as $r_{\max}\to R_0$. The relationship can be observed from the condition (\ref{eq3.7}). We also note that if $\gamma=0$, then the variable step-sizes BDF2 method with $r_{\max}<R_0$ is unconditionally stable. Further, if there exists a constant $\alpha>0$ such that
\begin{eqnarray}\label{eq2.1}
|v|\le \alpha\|v\|,\quad \forall v\in V,
\end{eqnarray}
then when $\alpha\gamma^2<1$, the conditions upon $k_{\max}$ can be relaxed to $c\gamma^2k_{\max}<1$, no longer dependent on the step-size ratio $r_{\max}$. This is because in this case the inequalities (\ref{eq3.10a}) and (\ref{eq7.19}) in the proof of  Theorem \ref{th1} can be replaced by
\begin{eqnarray}
2|(f^n,U^n)|\le \frac{1}{2(1-\alpha\gamma^2(t^n)-\epsilon)}\|f^n\|^2_*+2(1-\alpha\gamma^2(t^n)-\epsilon)\|U^n\|^2,\quad \epsilon\in (0,1-\alpha\gamma^2),\nonumber\\
\end{eqnarray}
and \begin{eqnarray}
2|(BU^n,U^n)|\le 2\gamma(t^n)\|U^n\||U^n|\le 2\alpha\gamma^2(t^n)\|U^n\|^2,
\end{eqnarray}
respectively.

The third remark is about the operators $A$ and $B$. It is natural to write $\tilde A=A+B$ and consider the linear problems $u^{\prime}(t)+\tilde Au(t)=f(t)$. In this case, the operator $\tilde A$ will satisfy a G\aa rding inequality
\begin{equation}
(\tilde Au,u)\ge \tilde \alpha\|u\|^2-\beta|u|^2,\qquad u\in V.
\end{equation} By a standard change of variables the equation can be equivalently written in a form such that the new operator is coercive. Then we can obtain similar results to those in {\sc Theorems 3.1--3.4}.

Finally, we think that the value of $R_0$ for the $l^\infty(J;H)$ stability cannot be improved when dealing with arbitrary variable step-sizes, as said for the zero-stability (see, for example, \cite{Grigorieff83,Calvo90,Calvo93}).

\section{Error estimates} In this section, based on the stability estimates (\ref{eq7.3}), \eqref{eq3.8aa}, \eqref{eq3.3aa}, and (\ref{eq3.1}), we derive a priori error estimates for the variable step-size BDF2 method (\ref{eq2.5}). To do this, we first consider the consistency error $d^n_2$ of the method (\ref{eq2.5}) for the solution $u$ of (1.1), which is given by
\begin{equation}
d^n_2=\bar\partial^2_B u(t^n)+Au(t^n)+Bu(t^n)-f(t^n)=\bar\partial^2_B u(t^n)-u^\prime(t^n),\quad n=2,\dotsc,N.
\end{equation}
By Taylor expanding about $t^{n-1}$, for $n\ge 2$, we obtain,
\begin{equation}
d^n_2=\frac{(1+r_n)}{2k_n}\int^{t^n}_{t^{n-1}}(t^{n-1}-t)^2u^{\prime\prime\prime}(t)dt
-\frac{r_ns_n}{2k_{n}}\int^{t^{n}}_{t^{n-2}}(t-t^{n-2})^2u^{\prime\prime\prime}(t)dt.
\end{equation}

As mentioned in Section 2, $\bar\partial^2_B$ degenerate to $\bar\partial^1_B$ whenever $r_n=0$. In this case, we come up with
\begin{equation}
d^n_1=\bar\partial^1_B u^n-u^\prime(t^n)=-\frac{1}{k_n}\int^{t^n}_{t^{n-1}}(t-t^{n-1})u^{\prime\prime}(t)dt.
\end{equation}
Then the consistency errors of these schemes can be bounded by the following
\begin{equation}\label{eq3.47}
|d^n_2|\le c_4k_n^2,\quad n\ge 2;\qquad |d^n_1|\le c_4k_n,\quad n\ge 1,
\end{equation}
where $c_4$ depends only on some derivatives of the exact solution $u$.

\subsection{$l^\infty(J;V)$ and $l^2(J;H,H)$ error estimates} Let $e^n=u(t^n)-U^n~(n=0,1,\dotsc,N)$ be the error. We first derive the error bound in the $l^\infty(J;V)$ and $l^2(J;H,H)$ norms.
\begin{theorem}[$l^\infty(J;V)$ and $l^2(J;H,H)$ error estimates under $R<R_0$]\label{th4.1} Let $r_{\max}\le R$ with $1<R<R_0=\sqrt{2}+1\approx 2.414$. If $e^0=0$ and there exists a constant $c_1$ such that $k_{\max}$ satisfies \eqref{eq7.2}, then the error $e^n =u(t^n)-U^n~ (n=2,3,\dotsc,N)$ satisfies
\begin{equation}\label{eq4.5}
\left\|e\right\|^2_{l^2(t^2,t^n;H,H)}+\|e\|^2_{l^\infty(t^2,t^n;V)}\le C\left(k_{\max}^4t^n+\frac{k_2}{k_1^2}\left|e^{1}\right|^2+\|e^1\|^2\right),
\end{equation}
where $C$ depends on $t^n$ and the constants $\gamma$, $c_1$, $c_4$.
\end{theorem}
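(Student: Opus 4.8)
The plan is to derive an error equation by subtracting the scheme (\ref{eq2.5}) from the consistency relation, so that $e^n$ satisfies
\begin{equation*}
\bar\partial^2_B e^n + Ae^n + Be^n = d^n_2, \qquad n=2,3,\dotsc,N,
\end{equation*}
with $e^0=0$ and a given $e^1$. This is exactly the variable step-size BDF2 scheme applied to a homogeneous-data problem with right-hand side $d^n_2$ and starting values $e^0,e^1$. Hence I would invoke Theorem \ref{lem3.1} (the $l^\infty(J;V)$ and $l^2(J;H,H)$ stability estimate under $R<R_0$) directly with $U^n$ replaced by $e^n$ and $f^n$ replaced by $d^n_2$. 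The hypothesis (\ref{eq7.2}) on $k_{\max}$ is precisely what Theorem \ref{lem3.1} requires, and $e^0=0$ is assumed, so all conditions are met.

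Carrying this out, the conclusion (\ref{eq7.3}) of Theorem \ref{lem3.1} gives
\begin{equation*}
k_n|\bar\partial e^n|^2 + \|e\|^2_{l^2(t^2,t^n;H,H)} + \|e\|^2_{l^\infty(t^2,t^n;V)} \le C_1\Bigl(\sum_{j=2}^n k_j|d^j_2|^2 + k_2 s_2 |\bar\partial e^1|^2 + \|e^1\|^2\Bigr),
\end{equation*}
where $C_1$ is bounded by a constant depending only on $\gamma$, $c_1$ and $t^n$. It then remains to bound the three terms on the right. For the consistency term I would use the estimate $|d^j_2|\le c_4 k_j^2$ from (\ref{eq3.47}), so that $\sum_{j=2}^n k_j|d^j_2|^2 \le c_4^2 \sum_{j=2}^n k_j k_j^4 \le c_4^2 k_{\max}^4 \sum_{j=2}^n k_j \le c_4^2 k_{\max}^4 t^n$. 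For the starting-value term, since $e^0=0$ we have $\bar\partial e^1 = (e^1-e^0)/k_1 = e^1/k_1$, whence $k_2 s_2|\bar\partial e^1|^2 = k_2 s_2 |e^1|^2/k_1^2 \le (k_2/k_1^2)|e^1|^2$ using $s_2<1$. The term $\|e^1\|^2$ is already in the desired form. Combining these three bounds and absorbing $c_4$, $c_1$ and the exponential factor in $C_1$ into a single constant $C=C(t^n,\gamma,c_1,c_4)$ yields (\ref{eq4.5}).

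There is essentially no hard step here: the argument is a clean application of the already-established stability theorem to the error equation, the only minor points being the substitution $\bar\partial e^1 = e^1/k_1$ (valid because $e^0=0$) and the routine telescoping $\sum k_j = t^n - t^0 \le t^n$. If one wanted to be careful, the only thing worth double-checking is that the constant $C_1$ in Theorem \ref{lem3.1} genuinely depends only on the data quantities listed in the statement of Theorem \ref{th4.1} (namely $t^n$, $\gamma$, $c_1$) and not on the solution, which is indeed the case from the explicit formula for $C_1$. Thus the proof is short and the main work has already been done in Section 3.
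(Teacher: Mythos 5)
Your proposal is correct and follows exactly the paper's own argument: form the error equation $\bar\partial^2_B e^n + Ae^n + Be^n = d^n_2$, apply Theorem \ref{lem3.1} with $e^n$ in place of $U^n$ and $d^n_2$ in place of $f^n$, then use the consistency bound $|d^j_2|\le c_4 k_j^2$ and the fact that $e^0=0$ gives $\bar\partial e^1 = e^1/k_1$. No gaps; the paper's proof is just as short.
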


\begin{proof} It follows from (1.1) and (\ref{eq2.5}) that
\begin{equation}\label{eq4.6}
\bar\partial^2_Be^n+Ae^n+Be^n=d^n_2,\quad n\ge 2.
\end{equation}
Then we may apply Theorem 3.1 to obtain
\begin{equation}
\left\|e\right\|^2_{l^2(t^2,t^n;H,H)}+\|e\|^2_{l^\infty(t^2,t^n;V)}
\le C\left(\sum\limits_{j=2}^{n}k_j\left|d_2^{j}\right|^2
+k_2s_2\left|\bar\partial e^{1}\right|^2+\|e^1\|^2\right).
\end{equation}
Noting $e^0=0$ and using the consistency errors bound (\ref{eq3.47}), we obtain (\ref{eq4.5}) and complete the proof.
\end{proof}


Now we estimate the starting error $e^1$. For this purpose, we consider the consistency error $d^1_2$ of the first step by the trapezoidal scheme
 \begin{equation}
d^{1}_2:=\bar{\partial}u^{1}+Au^{\frac{1}{2}}+Bu^{\frac{1}{2}}-f^{\frac{1}{2}}.
\end{equation}
It is well known that, under obvious regularity assumptions,
\begin{equation}
\| d^{1}_2\|_{*}\le c_4 k^{2}_{1}.
\end{equation}
When
\begin{eqnarray}\label{eq4.10}
k_1< \frac{3}{\gamma^2(t^1)},
\end{eqnarray}from $e^0=0$ and
 \begin{equation*}
e^{1}+\frac{k_{1}}{2}[Ae^{1}+Be^{1}]=k_{1}d^{1}_2,
\end{equation*}
we obtain
\begin{equation}\label{eq4.11}
|e^{1}|^{2}+k_{1}\| e^{1}\|^{2}\le 8 c^2_4k_{1}^{5}.
\end{equation}
Then we have the following corollary.
\begin{corollary} Let $r_{\max}\le R$ with $1<R<R_0=\sqrt{2}+1\approx 2.414$, and let the starting value $U^1$ be computed by the trapezoidal scheme \eqref{eq2.4} with $k_1$ satisfying \eqref{eq4.10}. If $e^0=0$ and there exists a constant $c_1$ such that $k_{\max}$ satisfies \eqref{eq7.2}, then the error $e^n=u(t^n)-U^n~(n=2,3,\dotsc,N)$ satisfies
\begin{equation}
\left\|e\right\|^2_{l^2(t^1,t^n;H,H)}+\|e\|^2_{l^\infty(t^1,t^n;V)}\le C( k_{\max}^4+k_1^4),
\end{equation}
where $C$ depends only on $C_1$, $t^n$, $c_4$ and $r_2$.
\end{corollary}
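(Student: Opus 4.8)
The plan is to combine the $l^\infty(J;V)$ and $l^2(J;H,H)$ error estimate of Theorem \ref{th4.1} with the starting error bound \eqref{eq4.11} for the trapezoidal scheme, but with care taken over the discrepancy between the norms: Theorem \ref{th4.1} produces the terms $\tfrac{k_2}{k_1^2}|e^1|^2$ and $\|e^1\|^2$, and these must be controlled by $k_1^4$. First I would invoke Theorem \ref{th4.1}, which is legitimate since $e^0=0$ and $k_{\max}$ satisfies \eqref{eq7.2}, to get
\begin{equation*}
\left\|e\right\|^2_{l^2(t^2,t^n;H,H)}+\|e\|^2_{l^\infty(t^2,t^n;V)}\le C\left(k_{\max}^4 t^n+\frac{k_2}{k_1^2}|e^1|^2+\|e^1\|^2\right).
\end{equation*}

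Next I would feed in the trapezoidal starting-error estimate \eqref{eq4.11}, namely $|e^1|^2+k_1\|e^1\|^2\le 8c_4^2 k_1^5$, which holds under \eqref{eq4.10}. From it, $|e^1|^2\le 8c_4^2 k_1^5$, so $\tfrac{k_2}{k_1^2}|e^1|^2\le 8c_4^2 k_2 k_1^3$; and $\|e^1\|^2\le 8c_4^2 k_1^4$. To absorb $k_2 k_1^3$ into something of the form $k_1^4+k_{\max}^4$ I would use $r_2=k_2/k_1\le R$, giving $k_2 k_1^3\le R\, k_1^4$; this is exactly why the constant $C$ in the statement is allowed to depend on $r_2$. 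Thus both starting-error contributions are bounded by $C(c_4,R)\,k_1^4$. Combining, the right-hand side is $\le C(k_{\max}^4 t^n + k_1^4)$, which after absorbing $t^n\le T$ into $C$ gives the $l^\infty(t^2,t^n;V)$ and $l^2(t^2,t^n;H,H)$ bound $C(k_{\max}^4+k_1^4)$.

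It remains to upgrade the index range from $t^2$ to $t^1$, i.e. to include the $j=1$ terms in both norms. For the $l^\infty(t^1,t^n;V)$ norm this just means also bounding $\|e^1\|^2$, which by \eqref{eq4.11} is $\le 8c_4^2 k_1^4$, already of the required form. For the $l^2(t^1,t^n;H,H)$ norm the extra term (recall the definition $\|e\|^2_{l^2(t^{n_1},t^{n_2};H,H)}=\sum_{j=n_1}^{n_2-1}k_j s_j|\bar\partial e^j|^2$) is the $j=1$ summand $k_1 s_1 |\bar\partial e^1|^2$; since $\bar\partial e^1=e^1/k_1$ (as $e^0=0$) and $s_1\le 1$, this equals $s_1|e^1|^2/k_1\le |e^1|^2/k_1\le 8c_4^2 k_1^4$ by \eqref{eq4.11}. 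So the $j=1$ terms are harmless, and after renaming the constant $C$ (now depending on $C_1$, $t^n$, $c_4$, $r_2$) we arrive at the claimed estimate.

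I do not expect any serious obstacle here: the result is essentially a bookkeeping corollary, assembling Theorem \ref{th4.1} and \eqref{eq4.11}. The only point requiring a moment's attention is the handling of $\tfrac{k_2}{k_1^2}|e^1|^2$, where one loses one power of $k_1$ relative to $\|e^1\|^2$ and must recover it using the step-size-ratio bound $k_2\le R k_1$ — hence the explicit dependence of $C$ on $r_2$; and, relatedly, making sure the extension of the summation/maximum down to the index $1$ contributes only terms of order $k_1^4$.
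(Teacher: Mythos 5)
Your proposal is correct and matches the paper's (implicit) argument: the corollary is obtained exactly by substituting the trapezoidal starting-error bound \eqref{eq4.11} into Theorem \ref{th4.1}, with $\frac{k_2}{k_1^2}|e^1|^2\le r_2 k_1^4\cdot 8c_4^2$ accounting for the dependence of $C$ on $r_2$. Your extra care in checking that the $j=1$ contributions to the $l^2(t^1,t^n;H,H)$ and $l^\infty(t^1,t^n;V)$ norms are also $O(k_1^4)$ is a correct and welcome piece of bookkeeping that the paper leaves unstated.
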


From Corollary 4.2, we know that the optimal convergence order of the variable step-size BDF2 can be achieved in the $l^\infty(J;V)$ and $l^2(J;H,H)$ norms when the trapezoidal scheme (\ref{eq2.4}) is used to compute the starting value $U^1$. We also notice that from Theorem 3.2 where the condition on the step-size rations has been relaxed to $\frac{3+\sqrt{17}}{2}$, we can also obtain the second order convergence result in the $l^\infty(J;V)$ and $l^2(J;V)$ norms if the trapezoidal scheme (\ref{eq2.4}) is used to compute the starting value $U^1$. The following theorem states this fact.

\begin{theorem}[$l^\infty(J;V)$ and $l^2(J;V)$ error estimates under $R<R_1$] Let $r_{\max}\le R$ with $1<R<R_1=(3+\sqrt{17})/2\approx 3.561$. If $e^0=0$ and there exists a constant $c_1$ such that $k_{\max}$ satisfies \eqref{eq3.7a}, then the error $e^n=u(t^n)-U^n~(n=2,3,\dotsc,N)$ satisfies
\begin{equation}\label{eq4.13}
\left\|e\right\|^2_{l^2(t^2,t^n;V)}+\|e\|^2_{l^\infty(t^2,t^n;V)}\le C\left(k_{\max}^4t^n+\frac{1}{k_1}\left|e^{1}\right|^2+\|e^1\|^2\right).
\end{equation}
Furthermore, if the starting value $U^1$ is computed by the trapezoidal scheme \eqref{eq2.4} with $k_1$ satisfying \eqref{eq4.10}, then we have \begin{equation}
\left\|e\right\|^2_{l^2(t^1,t^n;V)}+\|e\|^2_{l^\infty(t^1,t^n;V)}\le C( k_{\max}^4+k_1^4).
\end{equation}
Here $C$ depends on $t^n$ and the constants $\gamma$, $c_1$, $c_R$, $c_4$, $C_2$.
\end{theorem}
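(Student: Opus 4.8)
The plan is to mirror the structure of the proof of Theorem~\ref{th4.1} and Corollary~4.2, but invoking the $R<R_1$ stability result (Theorem~3.2) instead of Theorem~3.1. First I would write down the error equation: subtracting (\ref{eq2.5}) from (1.1) gives $\bar\partial^2_Be^n+Ae^n+Be^n=d^n_2$ for $n\ge 2$, exactly as in (\ref{eq4.6}). Since this is a linear recursion of the same form as the one governing $\{U^n\}$, with the data $f^n$ replaced by the consistency error $d^n_2$ and initial data $e^0,e^1$, I can apply the stability estimates \eqref{eq3.8aa} and \eqref{eq3.3aa} of Theorem~3.2 directly to $\{e^n\}$. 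Adding these two estimates and noting $e^0=0$ (so $\bar\partial e^1=e^1/k_1$, whence $k_1|\bar\partial e^1|^2=|e^1|^2/k_1$) yields
\begin{equation*}
\|e\|^2_{l^2(t^2,t^n;V)}+\|e\|^2_{l^\infty(t^2,t^n;V)}\le C\Bigl(\tfrac1{k_1}|e^1|^2+\|e^1\|^2+c_R\sum_{j=2}^nk_j|d_2^{j}|^2\Bigr).
\end{equation*}
Then I would insert the consistency bound $|d^n_2|\le c_4k_n^2$ from (\ref{eq3.47}), so that $\sum_{j=2}^nk_j|d_2^j|^2\le c_4^2k_{\max}^4\sum_{j=2}^nk_j=c_4^2k_{\max}^4t^n$ (up to the trivial bound $\sum k_j\le t^n$), which gives precisely \eqref{eq4.13}. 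The constant $C$ absorbs $C_2$, $c_R$, $c_4$, $\gamma$, $c_1$ and $t^n$ exactly as claimed.

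For the second assertion I would reuse verbatim the starting-error analysis already carried out before Corollary~4.2: under (\ref{eq4.10}) the trapezoidal step gives $|e^1|^2+k_1\|e^1\|^2\le 8c_4^2k_1^5$, hence $\tfrac1{k_1}|e^1|^2\le 8c_4^2k_1^4$ and $\|e^1\|^2\le 8c_4^2k_1^4$. Substituting into \eqref{eq4.13} and bounding $t^n\le T$ turns the right-hand side into $C(k_{\max}^4+k_1^4)$. I should also extend the norms on the left from $l^\bullet(t^2,t^n;\cdot)$ to $l^\bullet(t^1,t^n;\cdot)$; this only adds the single terms $k_1\|e^1\|^2$ (for the $l^2(t^1,t^n;V)$ norm) and $\|e^1\|^2$ (for the $l^\infty$ norm), both already controlled by $Ck_1^4$, so the stated estimate follows after adjusting $C$ to also depend on $r_2$ (which enters when relating $k_1$-weighted and $k_2$-weighted quantities, as in Corollary~4.2).

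I do not anticipate a genuine obstacle here — the theorem is essentially a corollary of Theorem~3.2 combined with the consistency bound and the already-established trapezoidal starting-error estimate. The only points requiring minor care are bookkeeping ones: confirming that the hypotheses of Theorem~3.2 (namely $r_{\max}\le R<R_1$, the choice of $c_R$, and (\ref{eq3.7a}) on $k_{\max}$) are exactly what is assumed here so the stability estimate applies to $\{e^n\}$; correctly handling the $e^0=0$ reduction $k_1|\bar\partial e^1|^2=|e^1|^2/k_1$ that produces the $1/k_1$ weight in \eqref{eq4.13} (as opposed to the $k_2/k_1^2$ weight appearing in Theorem~\ref{th4.1}, which comes from the $k_2s_2|\bar\partial e^1|^2$ term in Theorem~3.1's conclusion); and verifying that replacing each $k_n^2$ by $k_{\max}^2$ in the consistency sum is legitimate since $|d_2^n|\le c_4k_n^2\le c_4k_{\max}^2$. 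None of these needs a new idea, so the proof should be short, of the same length as that of Theorem~\ref{th4.1}, and I would likely remark that it is "analogous to the proof of Theorem~\ref{th4.1} and Corollary~4.2" for the routine parts.
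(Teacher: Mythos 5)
Your proposal is correct and is exactly the argument the paper intends: the theorem is presented there as an immediate consequence of applying the stability estimates \eqref{eq3.8aa} and \eqref{eq3.3aa} of Theorem~3.2 to the error equation \eqref{eq4.6}, inserting the consistency bound \eqref{eq3.47}, and then using the trapezoidal starting-error estimate \eqref{eq4.11} for the second assertion. The only negligible slip is your remark that $C$ should also depend on $r_2$ — here the starting term carries the weight $1/k_1$ rather than $k_2/k_1^2$, so $r_2$ never enters.
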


\subsection{$l^\infty(J;H)$ and $l^2(J;V)$ error estimates} This subsection is devoted to the $l^\infty(J;H)$ and $l^2(J;V)$ error estimates. We have the following results.

\begin{theorem}[$l^\infty(J;H)$ and $l^2(J;V)$ error estimates under $R<R_0$]\label{th3.5} Let $r_{\max}\le R$ with $1<R<R_0=\sqrt{2}+1\approx 2.414$. If $e^0=0$ and there exist constants $c_1$ and $c_2$ such that $k_{\max}$ satisfies \eqref{eq7.2} and \eqref{eq3.7}, then the error $e^n=u(t^n)-U^n~(n=2,3,\dotsc,N)$ satisfies
\begin{equation}\label{eq2.12}
\begin{split}
|e|^2_{l^\infty(t^2,t^n;H)}+\|e\|^2_{l^2(t^2,t^n;V)}\le& C\left(|e^{1}|^2+k_{\max} \|e^1\|^2+\frac{k_{\max}k_2}{k_1^2}|e^{1}|^2+k^4_{\max}\right),
\end{split}
\end{equation}
where $C$ depends only on $\gamma$, $c_i$, $i=1,2,3,4$, and $R$, $t^n$, $\Phi_n$.
\end{theorem}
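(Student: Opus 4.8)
The plan is to mimic the structure of the proof of Theorem \ref{th1}, but starting from the error equation \eqref{eq4.6}, $\bar\partial^2_Be^n+Ae^n+Be^n=d^n_2$, rather than from \eqref{eq2.5}. Since \eqref{eq4.6} has exactly the form of \eqref{eq2.5} with $U^n$ replaced by $e^n$ and $f^n$ replaced by the consistency error $d^n_2$, the stability estimate \eqref{eq3.1} of Theorem \ref{th1} applies verbatim to $e^n$. Concretely, I would invoke Theorem \ref{th1} (whose hypotheses \eqref{eq7.2} and \eqref{eq3.7} are precisely those assumed here) to get
\begin{equation*}
|e|^{2}_{l^\infty(t^2,t^n;H)}+\|e\|^2_{l^2(t^2,t^n;V)}\le C f_e,
\end{equation*}
where
\begin{equation*}
f_e=|e^{1}|^2+k_{\max} \|e^1\|^2+(k_1^2+k_{\max}k_2)|\bar\partial e^{1}|^2+\sum\limits_{j=2}^nk_j\left(\|d_2^j\|_*^2+k_{\max}\left|d_2^{j}\right|^2\right).
\end{equation*}

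The remaining work is purely bookkeeping on the terms of $f_e$. For the consistency-error contributions I would use the bounds \eqref{eq3.47}, namely $|d_2^j|\le c_4 k_j^2$ (and hence $\|d_2^j\|_*\le c_* |d_2^j| \le C k_j^2$ once one uses $\|\cdot\|_*\le C|\cdot|$, or directly the $V^*$-bound on $d_2^j$ obtained from the integral representation), so that
\begin{equation*}
\sum\limits_{j=2}^nk_j\left(\|d_2^j\|_*^2+k_{\max}|d_2^j|^2\right)\le C\sum\limits_{j=2}^n k_j k_j^4\bigl(1+k_{\max}\bigr)\le C\,k_{\max}^4\sum_{j=2}^n k_j\le C\,t^n\,k_{\max}^4,
\end{equation*}
which is absorbed into the $Ck^4_{\max}$ term of \eqref{eq2.12} (up to the $t^n$-dependence of $C$). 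For the starting-error term I would rewrite $\bar\partial e^1=(e^1-e^0)/k_1=e^1/k_1$ using $e^0=0$, so that $(k_1^2+k_{\max}k_2)|\bar\partial e^1|^2=(1+k_{\max}k_2/k_1^2)|e^1|^2$; combining with $|e^1|^2$ and $k_{\max}\|e^1\|^2$ yields exactly the right-hand side of \eqref{eq2.12}.

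There is essentially no genuine obstacle here: the theorem is a corollary of Theorem \ref{th1} applied to the error equation, and the only mild subtlety is tracking how the $k_{\max}$-factors and the $e^0=0$ simplification recombine the $|\bar\partial e^1|^2$ term into the stated form, together with noting that the factor $(1+k_{\max})$ multiplying $k_{\max}^4 t^n$ is harmless since $k_{\max}\le k_{\max}^0$ is bounded (say by \eqref{eq7.2}) and can be swallowed by $C$. One should also remark that $\Phi_n$ and $T$ enter $C$ through the constant of Theorem \ref{th1}, which is why the statement records the dependence of $C$ on $t^n$ and $\Phi_n$. I would close the proof with one sentence invoking Theorem \ref{th1}, one displaying $f_e$, and two lines doing the estimates above.
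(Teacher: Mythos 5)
Your proposal is correct and is essentially identical to the paper's proof: the paper likewise applies Theorem~\ref{th1} to the error equation \eqref{eq4.6}, obtaining exactly your $f_e$, and then concludes from $e^0=0$ and the consistency bound \eqref{eq3.47}. The bookkeeping you spell out (rewriting $\bar\partial e^1=e^1/k_1$ and absorbing $\sum_j k_j(\|d_2^j\|_*^2+k_{\max}|d_2^j|^2)\le C t^n k_{\max}^4$ into the constant) is precisely what the paper leaves implicit.
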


\begin{proof} An application of Theorem \ref{th1} to (\ref{eq4.6}) yields
\begin{equation}
\begin{split}
&|e|^2_{l^\infty(t^2,t^n;H)}+\|e\|^2_{l^2(t^2,t^n;V)}\\
\le& C\left(|e^{1}|^2+k_{\max} \|e^1\|^2+(k_1^2+k_{\max}k_2)|\bar\partial e^{1}|^2+\sum\limits_{j=2}^nk_j\left(\|d^j_2\|_*^2+k_{\max}\left|d^{j}_2\right|^2\right)\right).
\end{split}
\end{equation}
Taking $e^0=0$ and the consistency errors bound (\ref{eq3.47}) into account, we obtain (\ref{eq2.12}) and complete the proof.
\end{proof}

A comparison with the estimate \eqref{eq4.13} in Theorem 4.3 suggests that the $l^2(J;V)$ estimate obtained under the condition $R<R_0$ is sharper than the estimate obtained under the condition $R<R_1$.

Combining Theorem \ref{th3.5} and the estimates (\ref{eq4.11}) for the starting error $e^1$ produced by the trapezoidal scheme (\ref{eq2.4}) leads to the following corollary.
\begin{corollary} Let $r_{\max}\le R$ with $1<R<R_0=\sqrt{2}+1\approx 2.414$, and let the starting value $U^1$ be computed by the trapezoidal scheme \eqref{eq2.4} with $k_1$ satisfying \eqref{eq4.10}. If $e^0=0$ and there exist constants $c_1$ and $c_2$ such that $k_{\max}$ satisfies \eqref{eq7.2} and \eqref{eq3.7}, then the error $e^n=u(t^n)-U^n~(n=2,3,\dotsc,N)$ satisfies
\begin{equation}
\begin{split}
|e|^2_{l^\infty(t^2,t^n;H)}+\|e\|^2_{l^2(t^2,t^n;V)}\le& C\left(k_1^5+k_{\max} k_1^4 +k^4_{\max}\right),
\end{split}
\end{equation}
where $C$ depends only on $\gamma$, $c_i$, $i=1,2,3,4$, and $R$, $t^n$, $\Phi_n$.
\end{corollary}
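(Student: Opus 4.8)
The plan is to obtain this corollary as an immediate consequence of Theorem~\ref{th3.5} combined with the first-step error bound \eqref{eq4.11} produced by the trapezoidal scheme. All hypotheses of Theorem~\ref{th3.5} are in force here: $r_{\max}\le R<R_0$, the initial condition $e^0=0$, and the smallness conditions \eqref{eq7.2} and \eqref{eq3.7} on $k_{\max}$. Hence Theorem~\ref{th3.5}, applied to the error equation \eqref{eq4.6}, gives directly
\[
|e|^2_{l^\infty(t^2,t^n;H)}+\|e\|^2_{l^2(t^2,t^n;V)}\le C\Big(|e^{1}|^2+k_{\max}\|e^1\|^2+\frac{k_{\max}k_2}{k_1^2}\,|e^{1}|^2+k^4_{\max}\Big),
\]
where $C$ already depends only on $\gamma$, $c_1,\dots,c_4$, $R$, $t^n$, $\Phi_n$ (the $d_2^j$-contributions having been bounded by $k_{\max}^4$ via the consistency estimate \eqref{eq3.47}, exactly as in the proof of Theorem~\ref{th3.5}).

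Next I would insert the starting-error bound. Since $U^1$ is computed by the trapezoidal scheme \eqref{eq2.4} and $k_1$ satisfies \eqref{eq4.10}, estimate \eqref{eq4.11} yields $|e^1|^2\le 8c_4^2 k_1^5$ and $\|e^1\|^2\le 8c_4^2 k_1^4$. Substituting these into the right-hand side above: the term $|e^1|^2$ is of order $k_1^5$; the term $k_{\max}\|e^1\|^2$ is of order $k_{\max}k_1^4$; and, using $k_2=r_2 k_1\le R k_1$, the cross term obeys $\frac{k_{\max}k_2}{k_1^2}|e^1|^2\le 8c_4^2 R\,k_{\max}k_1^4$, again of order $k_{\max}k_1^4$. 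Collecting these three contributions together with the $k_{\max}^4$ term gives the asserted estimate $C(k_1^5+k_{\max}k_1^4+k_{\max}^4)$, with the constant still depending only on the quantities listed.

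The deduction is pure bookkeeping, so I do not anticipate any genuine difficulty. The one subtlety worth flagging is the paper's convention that $k_{\max}=\max_{2\le n\le N}k_n$ excludes the first step $k_1$, so the $k_1$-dependence must be displayed explicitly rather than absorbed into powers of $k_{\max}$; and the cross term $\frac{k_{\max}k_2}{k_1^2}|e^1|^2$ is controlled precisely by the step-ratio bound $r_2\le R$, which is the only place the ratio restriction enters this particular argument beyond what is already used inside Theorem~\ref{th3.5}.
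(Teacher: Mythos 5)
Your proposal is correct and follows exactly the paper's route: the paper proves this corollary precisely by combining Theorem~\ref{th3.5} with the trapezoidal starting-error bound \eqref{eq4.11}, and your bookkeeping (including the use of $k_2=r_2k_1\le Rk_1$ to reduce the cross term to order $k_{\max}k_1^4$) fills in the substitution the paper leaves implicit. No issues.
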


This corollary means that the variable step-size BDF2 method can achieve optimal convergence order in the $l^\infty(J;H)$ and $l^2(J;V)$ norms when the trapezoidal scheme (\ref{eq2.4}) is used to compute the starting value $U^1$.

\subsection{The backward Euler method for the starting value}In this subsection, we consider the convergence order of the variable step-size BDF2 method if the backward Euler method is used to calculate the starting value $U^1$. To do this, we need the following a priori estimate for $e^1$ with smooth $u$ (see, e.g., \cite{Becker98,Thomee97})
\begin{eqnarray}\label{eq6.1}
|e^1|\le c_5k_1^2,
\end{eqnarray}
where $c_5$ depends on the derivatives of the exact solution $u$. It follows from (\ref{eq2.9}) that
\begin{eqnarray}\label{eq6.3}
\bar\partial e^1+Ae^1+Be^1=\bar\partial u^1-u^\prime(t^1).
\end{eqnarray}
Taking in (\ref{eq6.3}) the inner product with $k_1e^1$, we obtain
\begin{eqnarray*}
|e^1|^2+k_1\|e^1\|^2\le |e^0|^2+\gamma(t^1) k_1\|e^1\||e^1|+k_1|e^1||\bar\partial u^1-u^\prime(t^1)|.
\end{eqnarray*}
Using $e^0=0$, (\ref{eq6.1}) and
$$|k_1[\bar\partial u^1-u^\prime(t^1)]|=\left|\int^{t^1}_0su^{\prime\prime}(s)ds\right|\le c_6k_1^2,$$
if $\gamma^2(t^1) k_1<1$, we have
\begin{eqnarray}\label{eq6.4}
|e^1|^2+k_1\|e^1\|^2\le 4c_6^2 k_1^4.
\end{eqnarray}
Then we have the following corollary.
\begin{corollary}[Backward Euler method for $U^1$] Let $U^n$, $n=2,3,\dotsc,N$, be the solution sequence of (\ref{eq5.4}), $U^0=u^0$, and let the starting value $U^1$ be computed by the backward Euler method \eqref{eq2.9} with $k_1$ satisfying $\gamma^2(t^1) k_1<1$.

{\rm (i)} If $r_{\max}\le R$ with $1<R<R_1=(3+\sqrt{17})/2\approx 3.561$, and if there exists a constant $c_1$ such that $k_{\max}$ satisfies \eqref{eq3.7a}, then we have following estimate
\begin{equation}
\left\|e\right\|^2_{l^2(t^1,t^n;V)}+\|e\|^2_{l^\infty(t^1,t^n;V)}\le C( k_{\max}^4+k_1^3),
\end{equation}
where $C$ depends only on $C_2$, $t^n$, $c_4$, $c_5$, $c_6$ and $c_R$;

{\rm (ii)} If we further restrict $r_{\max}\le R$ with $1<R<R_0=\sqrt{2}+1\approx 2.414$, and if there exists a constant $c_1$ such that $k_{\max}$ satisfies \eqref{eq7.2}, then we have following estimate
\begin{equation}
\left\|e\right\|^2_{l^2(t^1,t^n;H,H)}+\|e\|^2_{l^\infty(t^1,t^n;V)}\le C( k_{\max}^4+k_1^3),
\end{equation}
where $C$ depends only on $C_1$, $t^n$, $c_4$, $c_5$, $c_6$ and $r_2$;

{\rm (iii)} Let $r_{\max}\le R$ with $1<R<R_0=\sqrt{2}+1\approx 2.414$. If there exist constants $c_1$ and $c_2$ such that $k_{\max}$ satisfies \eqref{eq7.2} and \eqref{eq3.7}, then we have
\begin{equation}
\begin{split}
|e|^2_{l^\infty(t^1,t^n;H)}+\|e\|^2_{l^2(t^1,t^n;V)}\le& C\left(k_1^4+k^4_{\max}\right),
\end{split}
\end{equation}
where $C$ depends only on $\gamma$, $c_i$, $i=1,\dotsc,6$, and $R$, $t^n$, $\Phi_n$.
\end{corollary}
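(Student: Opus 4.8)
The plan is to feed the error equation \eqref{eq4.6}, $\bar\partial^2_Be^n+Ae^n+Be^n=d^n_2$ ($n\ge 2$), into whichever stability estimate matches the norm and the step-ratio range of each item, then to replace $d^n_2$ by the consistency bound \eqref{eq3.47} and $e^1$ by the backward Euler starting bound \eqref{eq6.4}; recall also $e^0=u^0-U^0=0$, as required by those theorems. Since $k_1$ is deliberately kept out of $k_{\max}$, a product such as $k_{\max}k_1^3$ cannot simply be absorbed into $k_{\max}^4$; instead I would use Young's inequality $k_{\max}k_1^3\le\frac14 k_{\max}^4+\frac34 k_1^4$ and, via $k_2=r_2k_1\le Rk_1$, also $k_{\max}k_2k_1^2\le Rk_{\max}k_1^3\le R(k_{\max}^4+k_1^4)$. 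The discrete norms on $[t^1,t^n]$ differ from those on $[t^2,t^n]$ only by the $j=1$ terms $\|e^1\|^2$, $k_1\|e^1\|^2$, $k_1s_1|\bar\partial e^1|^2$, which by \eqref{eq6.4} are of order $k_1^3$ (indeed $k_1^4$ for the last two), hence already covered by the claimed bounds.

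For item {\rm (i)} I would apply Theorem 4.3 (estimate \eqref{eq4.13}): under $R<R_1$ and \eqref{eq3.7a}, with $e^0=0$,
\begin{equation*}
\|e\|^2_{l^2(t^2,t^n;V)}+\|e\|^2_{l^\infty(t^2,t^n;V)}\le C\Bigl(k_{\max}^4 t^n+\frac{1}{k_1}|e^1|^2+\|e^1\|^2\Bigr).
\end{equation*}
From \eqref{eq6.4}, $|e^1|^2\le 4c_6^2k_1^4$ and $\|e^1\|^2\le 4c_6^2k_1^3$, so $\frac{1}{k_1}|e^1|^2\le 4c_6^2k_1^3$ and the right-hand side is $\le C(k_{\max}^4+k_1^3)$; appending the $j=1$ contributions extends this to $[t^1,t^n]$. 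Item {\rm (ii)} is the same with Theorem \ref{th4.1} (estimate \eqref{eq4.5}) in place of Theorem 4.3; there the starting contribution reads $\frac{k_2}{k_1^2}|e^1|^2+\|e^1\|^2$, and $\frac{k_2}{k_1^2}|e^1|^2\le\frac{Rk_1}{k_1^2}\cdot 4c_6^2k_1^4=4Rc_6^2k_1^3$, again of order $k_1^3$. This is precisely the \emph{order reduction} alluded to in Section~2: the backward Euler start provides only $\|e^1\|=O(k_1^{3/2})$ in the energy norm, so the $l^\infty(J;V)$ and $l^2(J;H,H)$ errors degrade to $O(k_1^{3/2})$, whereas the trapezoidal starts in Corollaries 4.2 and 4.5 keep the full $O(k_{\max}^2+k_1^2)$.

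For item {\rm (iii)} I would instead use Theorem \ref{th3.5} (estimate \eqref{eq2.12}), which after \eqref{eq3.47} gives
\begin{equation*}
|e|^2_{l^\infty(t^2,t^n;H)}+\|e\|^2_{l^2(t^2,t^n;V)}\le C\Bigl(|e^1|^2+k_{\max}\|e^1\|^2+\frac{k_{\max}k_2}{k_1^2}|e^1|^2+k_{\max}^4\Bigr).
\end{equation*}
Now \eqref{eq6.4} yields $|e^1|^2\le 4c_6^2k_1^4$, $k_{\max}\|e^1\|^2\le 4c_6^2k_{\max}k_1^3$ and $\frac{k_{\max}k_2}{k_1^2}|e^1|^2\le 4Rc_6^2k_{\max}k_1^3$; the Young estimates above turn the last two into $\le C(k_{\max}^4+k_1^4)$, so the right-hand side is $\le C(k_1^4+k_{\max}^4)$, and adding the $j=1$ terms (all $O(k_1^4)$) gives the assertion on $[t^1,t^n]$. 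Here $e^1$ enters only through $|e^1|$ in $H$, which is $O(k_1^2)$, never through $\|e^1\|$ in $V$, which is why no order is lost in this pair of norms.

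The argument is essentially bookkeeping once Theorems \ref{th4.1}, 4.3, \ref{th3.5} and the starting bound \eqref{eq6.4} are in hand; the only mild points of care are the separation of the two small parameters $k_1$ and $k_{\max}$ (forcing the use of Young's inequality rather than $k_1\le k_{\max}$) and the treatment of the endpoint $j=1$ in the discrete norms over $[t^1,t^n]$, which contributes only at the harmless order $k_1^3$ (resp. $k_1^4$) already present in the estimates.
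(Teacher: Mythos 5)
Your proposal is correct and follows exactly the paper's route: the paper's proof is the single line ``substitute \eqref{eq6.4} into \eqref{eq4.13}, \eqref{eq4.5} and \eqref{eq2.12},'' and you carry out precisely that substitution (with the extra, harmless bookkeeping of Young's inequality for $k_{\max}k_1^3$ and the $j=1$ endpoint terms). The only microscopic slip is that $k_1s_1|\bar\partial e^1|^2\le 4c_6^2k_1^3$ rather than $O(k_1^4)$ as you parenthetically claim, but this does not affect any of the stated bounds.
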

\begin{proof}
Substitute (\ref{eq6.4}) into \eqref{eq4.13}, (\ref{eq4.5}) and (\ref{eq2.12}) to obtain the required results.
\end{proof}

This corollary reveals that the convergence order of the constant step-size BDF2 method is only $1.5$ with respect to $k_1$ in the $l^\infty(J;V)$ and $l^2(J;H,H)$ norms when the backward Euler method (\ref{eq2.9}) is used to compute the starting value $U^1$. This implies that the order reduction phenomenon may appear for the constant step-size BDF2 method in the $l^\infty(J;V)$ and $l^2(J;H,H)$ norms if the backward Euler method (\ref{eq2.9}) is used to compute the starting value $U^1$. However, if we choose $k_1=O(k_{\max}^{4/3})$, the variable step-size BDF2 method can achieve optimal second order of convergence, even if the starting value $U^1$ is computed by the backward Euler method (\ref{eq2.9}). These are observed in the following numerical experiments.

\section{Variable step-size BDF2 method for semilinear parabolic equations} In this section, we derive error bounds of the variable step-size BDF2 method for the semilinear parabolic equation (\ref{eq1.1a}). Applying the variable step-size BDF2 method to (\ref{eq1.1a}) yields
\begin{eqnarray}\label{eq5.4}
\bar\partial^2_BU^{n}+AU^{n}=f(t^{n},U^{n}),\qquad n=2,3,\dotsc,N.
\end{eqnarray}

Let $\mathcal B_{u(t)}:=\left\{v\in V:\|v-u(t)\|\le 1\right\}$, i.e., a ball of radius $1$ centred at the value $u(t)$ of the solution $u$ at time $t$. We assume that $f(t,\cdot)$ satisfies the following local Lipschitz condition in a ball $\mathcal B_{u(t)}$ (see, e.g., \cite{Akrivis15}),
\begin{eqnarray}\label{eq5.2a}
 |f(t,v)-f(t,w)|\le \gamma(t) \|v-w\|,\qquad \forall v,w\in \mathcal B_{u(t)},~~t\in J,
 \end{eqnarray}
with a smooth nonnegative function $\gamma: J\to \mathbb R$. In view of the condition (\ref{eq5.2a}), proceeding as in the proof of Theorems 3.1, 3.2 and 3.3, we have the following error estimates for the variable step-size BDF2 method (\ref{eq5.4}).

\begin{theorem}\label{th5.3} Let $U^n$, $n=2,3,\dotsc,N$, be the solution sequence of (\ref{eq5.4}), and let $U^0=u^0$.

{\rm (i)} If $r_{\max}\le R$ with $1<R<R_1=(3+\sqrt{17})/2\approx 3.561$, and if there exists a constant $c_1$ such that $k_{\max}$ satisfies \eqref{eq3.7a}, then we have following estimate
\begin{equation}
\left\|e\right\|^2_{l^2(t^2,t^n;V)}+\|e\|^2_{l^\infty(t^2,t^n;V)}\le C\left(k_{\max}^4t^n+\frac{1}{k_1}\left|e^{1}\right|^2+\|e^1\|^2\right),
\end{equation}
where $C$ depends only on $C_2$, $t^n$, $c_4$, and $c_R$;

{\rm (ii)} If we further restrict $r_{\max}\le R$ with $1<R<R_0=\sqrt{2}+1\approx 2.414$, and if there exists a constant $c_1$ such that $k_{\max}$ satisfies \eqref{eq7.2}, then we have following estimate
\begin{equation}
\left\|e\right\|^2_{l^2(t^2,t^n;H,H)}+\|e\|^2_{l^\infty(t^2,t^n;V)}\le C\left(k_{\max}^4t^n+\frac{k_2}{k_1^2}\left|e^{1}\right|^2+\|e^1\|^2\right),
\end{equation}
where $C$ depends only on $C_1$, $t^n$, $c_4$, and $r_2$;

{\rm (iii)} Let $r_{\max}\le R$ with $1<R<R_0=\sqrt{2}+1\approx 2.414$. If there exist constants $c_1$ and $c_2$ such that $k_{\max}$ satisfies \eqref{eq7.2} and \eqref{eq3.7}, then we have
\begin{equation}
\begin{split}
|e|^2_{l^\infty(t^2,t^n;H)}+\|e\|^2_{l^2(t^2,t^n;V)}\le& C\left(|e^{1}|^2+k_{\max} \|e^1\|^2+\frac{k_{\max}k_2}{k_1^2}|e^{1}|^2+k^4_{\max}\right),
\end{split}
\end{equation}
where $C$ depends only on $\gamma$, $c_i$, $i=1,2,3,4$, and $R$, $t^n$, $\Phi_n$.
\end{theorem}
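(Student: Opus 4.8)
The plan is to mimic the structure of the linear analysis from Section 3, transferring each stability proof (Theorems 3.1, 3.2 and the argument behind 3.3/Theorem \ref{th1}) to the error equation for the semilinear scheme. Subtracting (\ref{eq5.4}) from the semilinear PDE (\ref{eq1.1a}) evaluated at $t^n$ gives the error equation
\begin{equation*}
\bar\partial^2_B e^n + A e^n = f(t^n,u(t^n)) - f(t^n,U^n) + d^n_2, \qquad n\ge 2,
\end{equation*}
where $d^n_2$ is the same consistency error as in Section 4, so $|d^n_2|\le c_4 k_n^2$ by (\ref{eq3.47}). The only structural difference from the linear error equation (\ref{eq4.6}) is that the role of the zeroth-order perturbation $B e^n$ is now played by $f(t^n,u(t^n))-f(t^n,U^n)$, which by the local Lipschitz assumption (\ref{eq5.2a}) satisfies $|f(t^n,u(t^n))-f(t^n,U^n)|\le \gamma(t^n)\|e^n\|$ \emph{provided} $U^n\in\mathcal B_{u(t^n)}$, i.e. $\|e^n\|\le 1$. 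Granting that membership, every estimate used to bound $2(Be^n,\cdot)$ in the proofs of Theorems 3.1--3.3 goes through verbatim with $Be^n$ replaced by $f(t^n,u(t^n))-f(t^n,U^n)$, because those proofs only ever used the bound $|Be^n|\le\gamma(t^n)\|e^n\|$. Hence, testing the error equation with $2k_n\bar\partial e^n$ and then (for part (iii)) with $\tfrac{2k_n}{1+r_n}e^n$, summing, and invoking the discrete Gronwall lemma (Lemma \ref{lem2.3}) exactly as before yields the three claimed inequalities, with the right-hand side starting data $k_2 s_2|\bar\partial e^1|^2+\|e^1\|^2$ and so on rewritten using $e^0=0$ and $\bar\partial e^1 = e^1/k_1$ as $\tfrac{k_2}{k_1^2}|e^1|^2+\|e^1\|^2$ (part (ii)), $\tfrac{1}{k_1}|e^1|^2+\|e^1\|^2$ (part (i)), and $|e^1|^2+k_{\max}\|e^1\|^2+\tfrac{k_{\max}k_2}{k_1^2}|e^1|^2$ (part (iii)); the $\sum k_j|d^j_2|^2$ and $\sum k_j\|d^j_2\|^2_*$ terms collapse to $Ck_{\max}^4 t^n$ via (\ref{eq3.47}).

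The genuine obstacle is the circularity inherent in (\ref{eq5.2a}): the Lipschitz bound is only available on the ball $\mathcal B_{u(t^n)}$, so one must know a priori that $\|e^n\|\le 1$ before the stability estimate that would prove $\|e^n\|$ is small. The standard remedy — and the one I would carry out — is an induction on $n$ combined with a smallness hypothesis on $k_{\max}$ (and on $k_1$, through $e^1$). Concretely: assume $\|e^j\|\le 1$ for all $j\le n-1$; then the Lipschitz estimate is valid up to index $n-1$, the summed energy identity holds through step $n$, and the Gronwall conclusion bounds $\|e^n\|^2$ by $C(k_{\max}^4 t^n + (\text{starting terms}))$. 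Choosing $k_{\max}$ small enough (and requiring the starting value to be computed accurately enough, e.g. by the trapezoidal or backward Euler schemes analysed in Section 4, so that the starting terms are $O(k_1^{\alpha})$ for suitable $\alpha$) makes this bound $<1$, closing the induction. The base case is $\|e^1\|\le 1$, which follows from the $e^1$ estimates (\ref{eq4.11}) or (\ref{eq6.4}) for small $k_1$. I would state this smallness requirement explicitly (it is implicit in the hypotheses ``$k_{\max}$ satisfies \eqref{eq7.2}'' etc., which already force $k_{\max}$ small) and note that it is absorbed into the constant $C$ and the standing assumptions.

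Beyond the induction, the remaining work is bookkeeping identical to Section 3: reusing identity (\ref{eq2.11}) and the decomposition (\ref{eq2.6}) to get (\ref{eq3.3a}) for $e^n$; for part (i) choosing $\epsilon$ as in the proof of Theorem 3.2 (the two cases $r_n\le 2$ and $2<r_n<R_1$) so that $g(r_n,\epsilon)>\tfrac{R}{1+R}$; for part (ii) choosing $\epsilon=\tfrac12-\tfrac{\sqrt2}{4}$ so that $\sup_{0\le x\le R_0}\tfrac{x^2}{1+x}\le 2(1-\epsilon)$; and for part (iii) reproducing the $U^{n^*}$ maximum-index trick and the bound (\ref{eq3.18a}) on $2\sum s_j^2|e^j-e^{j-1}|^2$ via the part-(ii) estimate. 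Since the statement of Theorem \ref{th5.3} explicitly says ``proceeding as in the proof of Theorems 3.1, 3.2 and 3.3,'' I would keep the write-up short: set up the error equation, record the Lipschitz bound on $\mathcal B_{u(t^n)}$, run the induction on $\|e^n\|\le 1$, cite Theorems 3.1--3.3 (equivalently \ref{lem3.1}, 3.2, \ref{th1}) for the structural estimates, and substitute the consistency bounds (\ref{eq3.47}) to obtain the three displayed inequalities. The main thing to get right is making the induction hypothesis and the $k_{\max}$-smallness quantitatively consistent, so that the Gronwall constant $C$ (which depends only on $\gamma,c_i,R,t^n,\Phi_n$ as claimed) does not itself depend on the induction bound.
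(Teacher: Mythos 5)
Your proposal follows the same route as the paper: the paper's own proof consists only of writing the error equation (its (\ref{eq5.12})), noting $|d^n_2|\le c_4k_n^2$, and declaring the rest ``quite similar to the proof given earlier for linear problem.'' Your bookkeeping of the starting terms (converting $k_2s_2|\bar\partial e^1|^2$, $k_1|\bar\partial e^1|^2$ and $(k_1^2+k_{\max}k_2)|\bar\partial e^1|^2$ via $\bar\partial e^1=e^1/k_1$) and of the consistency sums is exactly what the omitted computation amounts to, so on that score the two arguments coincide.

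Where you go beyond the paper is in flagging the circularity of the local Lipschitz condition (\ref{eq5.2a}): it only holds on the ball $\mathcal B_{u(t^n)}$, so the substitution of $f(t^n,u(t^n))-f(t^n,U^n)$ for $Be^n$ requires knowing $\|e^n\|\le 1$ in advance. The paper is silent on this, and your induction-plus-smallness argument is the right repair; this is a genuine improvement in rigor. One refinement you should make: in the summed energy inequality the Gronwall right-hand side contains the term $\frac{1}{\epsilon}\gamma^2(t^n)k_n\|e^n\|^2$ at the \emph{current} index $n$, so the Lipschitz bound is needed at step $n$ itself, not only for $j\le n-1$. Assuming $\|e^j\|\le 1$ for $j\le n-1$ alone does not immediately license the estimate through step $n$, because $U^n$ is defined implicitly and its membership in $\mathcal B_{u(t^n)}$ is not yet known. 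The standard ways to close this are either (a) to replace $f(t,\cdot)$ by a globally Lipschitz extension agreeing with $f$ on the balls, run the whole linear-style analysis for the modified scheme, and then conclude a posteriori that the modified iterates stay in the balls (hence solve the original scheme), or (b) to combine your induction with a continuity/fixed-point argument for the implicit equation at step $n$ showing it has a solution in $\mathcal B_{u(t^n)}$ under the stated step-size restrictions. Either device is routine (cf.\ the treatment in the Akrivis--Lubich reference the paper cites), but it should be stated; with that adjustment your proof is complete and is in fact more careful than the paper's.
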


\begin{proof} We commence with the error equation. Subtracting (\ref{eq1.1a}) and (\ref{eq5.4}), we obtain
\begin{eqnarray}\label{eq5.12}
\bar\partial^2_Be^{n}+Ae^n=f(t^{n},u(t^{n}))-f(t^{n},U^{n})+d^n_2,\qquad n=2,3,\dotsc,N,
\end{eqnarray}
where $d^2_n$ is the consistency error of the method for the solution $u$ of (\ref{eq1.1a}) given by
\begin{equation}
d^n_2=\bar\partial^2_B u(t^n)+Au(t^n)-f(t^n,u(t^n))=\bar\partial^2_B u(t^n)-u^\prime(t^n),\quad n=2,\dotsc,N.
\end{equation}
It is easy to show that $|d^n_2|\le c_4k_n^2,\quad n\ge 2$. The remaining part of this proof is quite similar to the proof given earlier for linear problem and so is omitted.
\end{proof}

Based on Theorem 5.1, we can obtain similar error estimates to Corollaries 4.2, 4.5 and 4.6 when the trapezoidal scheme and the backward Euler scheme are used to compute the starting value $U^1$, respectively. We do not intend to state these results, for they are similar. We want to emphasize that the convergence order of the constant step-size BDF2 method may be reduced in the $l^\infty(J;V)$ and $l^2(J;H,H)$ norms if the backward Euler scheme is used to compute the starting value $U^1$.

\section{Numerical experiments} To support the analysis developed in this paper, in this section, we present some numerical examples. We proceed by studying two different cases. The first one concerns a linear parabolic equation, while in the second one we consider a $2$D semilinear case.

\subsection{Experiment 1: linear case} Let us consider the following linear parabolic equation
\begin{eqnarray}\label{eq6.1a}
 u_t(t,x)=u_{xx}(t,x)+bu+f(t,x), \quad t\in [0,4],\quad x\in [0,1].
\end{eqnarray}
The function $f(t,x)$ and the initial and boundary values are selected in such a way that the exact solution becomes
$$u(t,x)=x(1-x)\exp(-t).$$
The space derivative $u_{xx} $ of (\ref{eq6.1a}) will be approximated with central finite difference of second order. After spatial discretization a system of ODEs results,
\begin{eqnarray} u_i^\prime(t)&=&\Delta
x^{-2}[u_{i-1}(t)-2u_{i}(t)+u_{i+1}(t)]+b u_i(t)+f_i(t), \qquad t\ge 0,\\
u_i(0)&=&i\Delta x(1- i\Delta x),~~i=1,2,\dotsc,M-1,\\
u_0(t)&=&u_{M}(t)=0,~~t\ge 0,
\end{eqnarray}
where $\Delta x=1/M$, $x_i=i\Delta x$, $u_i(t)$ is meant to
approximate the solution of (\ref{eq6.1a}) at the point $(t,x_i)$, and $f_i$ stands for $f(t,x_i)$.

We first test the stability of the variable step-sizes BDF2 (VSBDF2 for short) method under the condition $r_{\max}\le R$ with $1<R<R_0=\sqrt{2}+1\approx 2.414$. To do this, we consider an extreme case, a geometric mesh with $r_n=r=2.4<\sqrt{2}+1$ and $k_1= T(r-1)/(r^N-1)$. The starting value $U^1$ is computed by the backward Euler (BE for short) method. The time evolution of the discrete $L^2$ norm $|U^n|_{l^2}=\left(\sum\limits_{i=1}^M\Delta x|U^n_i|^2\right)^{1/2}$ of the numerical solution $U^n$ is presented in Fig. 6.1.
\begin{figure}
\centering
\includegraphics[height=6cm,width=8cm]{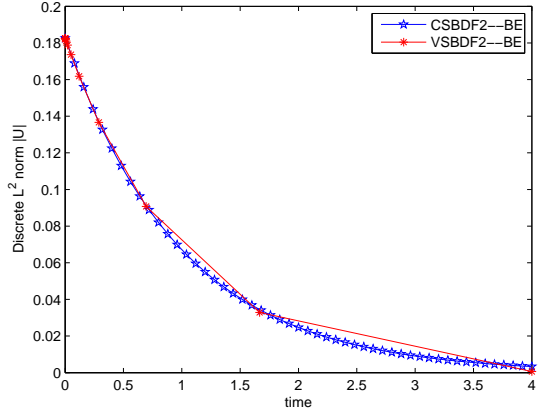}
\caption{Time evolution of the discrete $L^2$ norm $|U^n|_{l^2}$ of the numerical solutions obtained by the CSBDF2 method with the starting backward Euler step (CSBDF2-BE) and the VSBDF2 method ($r_{\max}=r=2.4$) with the starting backward Euler step (VSBDF2-BE) for linear problem (\ref{eq6.1a}), where $N=50$, $M=100$. }
\label{fig:1} 
\centering
\end{figure}
A comparison with the results of the constant step-size BDF2 (CSBDF2 for short) method reveals that the VSBDF2 method with $r_n=r=2.4$ is also stable.

Let $e^n_i=U^n_i-u(t^n,x_i)$ with the numerical solution $U^n_i$ approximating the exact solution $u(t^n,x_i)$ at point $(t^n,x_i)$. Since in this problem, there is not space discretization error, we set $M=100$. The time evolution of the discrete $L^2$ errors of the two methods (in the VSBDF2 method, $r_{\max}=r=1.1)$ are presented in Fig. 6.2. From Fig. 6.2 we observe that the VSBDF2 method is more efficient than the CSBDF2 method for this class of problems.
\begin{figure}
\centering
\includegraphics[height=6cm,width=8cm]{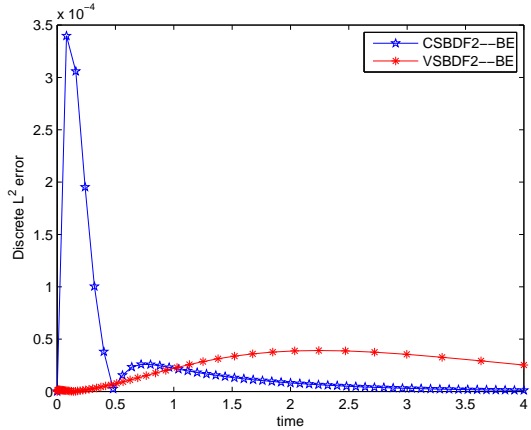}
\caption{Time evolution of the discrete $L^2$ errors of the CSBDF2 method with the starting backward Euler step (CSBDF2-BE) and the VSBDF2 method ($r_{\max}=r=1.1$) with the starting backward Euler step (VSBDF2-BE) for linear problem (\ref{eq6.1a}), where $N=50$, $M=100$. }
\label{fig:2} 
\centering
\end{figure}

To precisely test the convergence order of the methods in $l^\infty(J;V)$, $l^2(J;H,H)$, $l^\infty(J;H)$, and $l^2(J;V)$ norms, we first consider their discrete counterparts. Let the discrete $H^1$ norm $\|e^n\|_{\mathcal H^1}$ of the errors $e^n=[e^n_0,e^n_1,\dotsc, e^n_M]^T$ be calculated by $\|e^n\|_{\mathcal H^1}=\left(\sum\limits_{i=1}^M\Delta x|(e^n_i-e^n_{i-1})/\Delta x|^2\right)^{1/2}$. Then the discrete $l^\infty(J;V)$, $l^2(J;H,H)$, $l^\infty(J;H)$, and $l^2(J;V)$ errors are calculated by
\begin{eqnarray}
E_{l^\infty(J;V)}&=&\max\limits_{1\le n\le N}\|e^n\|_{\mathcal H^1},\quad E_{l^2(J;H,H)}=\left(\sum\limits_{n=2}^Nk_ns_n\sum\limits_{i=1}^M\Delta x\left|\frac{e^{n}_i-e^{n-1}_{i}}{k_n}\right|^2\right)^{1/2},\\
E_{l^\infty(J;H)}&=&\max\limits_{1\le n\le N}|e^n|_{l^2},\qquad
E_{l^2(J;V)}=\left(\sum\limits_{n=2}^Nk_n\|e^n\|_{\mathcal H^1}^2\right)^{1/2},
\end{eqnarray}
respectively.

We consider a mesh introduced by Becker in \cite{Becker98}: Choose the time levels $t^n$ according to $t^n=T(n/N)^{\varpi}$ with $\varpi\ge 1$. Note that $\varpi=1$ corresponds to constant step-size. In \cite{Becker98}, it has been observed that $\Phi_N$ is bounded since $r_n$ is decreasing. We also notice that $k_1=T(1/N)^{\varpi}$ and $k_{\max}=T\left[1-\left(\frac{N-1}{N}\right)^\varpi\right]\le T\varpi N^{-1}$. Obviously, when $\varpi\ge \frac{4}{3}$, we have $k_1=O(k_{\max}^{4/3})$. Then the VSBDF2 method with $\varpi\ge \frac{4}{3}$ can theoretically achieve optimal second order of convergence, whether the BE method (\ref{eq2.9}) or the trapezoidal formula (\ref{eq2.4}) (TF for short) is considered for computing the starting value $U^1$. The discrete errors $E_{l^\infty(J;V)}$,  $E_{l^2(J;H,H)}$, $E_{l^\infty(J;H)}$, and $E_{l^2(J;V)}$, and the convergence orders of the two methods, the CSBDF2 method and VSBDF2 method with $\varpi=3$, in different norms are listed in Tables 6.1, 6.2, 6.3 and 6.4, respectively. From Tables 6.1, 6.2, 6.3 and 6.4, we observe that all quantities of VSBDF2 are of optimal order two, whether the starting value $U^1$ is computed by the BE method (\ref{eq2.9}) or by the TF (\ref{eq2.4}). In fact, we find that all numerical data of VSBDF2 with the starting BE step (\ref{eq2.9}) (VSBDF2-BE) are almost the same as those of VSBDF2 with the starting TF step (\ref{eq2.4}) (VSBDF2-TF). This arises mainly because with our VSBDF2 method, $k_1=O(N^{-2}k_{\max})$ and therefore the errors $e^1$ has little effect on the global errors.

\begin{table}[htbp]
\centering\small
\caption{
The errors $E_{l^\infty(J;V)}$ and the convergence orders of the constant step-size BDF2 (CSBDF2) method and the variable step-size BDF2 (VSBDF2) method with $\varpi=3$ for problem (\ref{eq6.1a}). Upper: The backward Euler method for the starting value $U^1$; Bottom: The trapezoidal formula for the starting value $U^1$.}
\begin{tabular}{|c|cc|cc|}\hline
 &  CSBDF2&   & VSBDF2 &($\varpi=3$)\\
\cline{2-3}\cline{4-5}
$N$     & Error      &Order  & Error&Order   \\\hline
20           & 3.9026E-03   &              & 5.2723E-04   &               \\
40           & 1.5010E-03   & 1.3785       & 1.3204E-04   & 1.9975        \\
80           & 5.1347E-04   & 1.5476       & 3.3061E-05   & 1.9978       \\
160          & 1.7183E-04   & 1.5793       & 8.2644E-06   & 2.0001       \\
320          & 5.0390E-05   & 1.7698       & 2.0661E-06   & 2.0000        \\
   \hline
20           & 6.2112E-04   &              & 5.2723E-04   &              \\
40           & 1.6936E-04   & 1.8748       & 1.3204E-04   & 1.9975       \\
80           & 4.4067E-05   & 1.9423       & 3.3061E-05   & 1.9978       \\
160          & 1.1139E-05   & 1.9841       & 8.2644E-06   & 2.0001       \\
320          & 2.7969E-06   & 1.9937       & 2.0661E-06   & 2.0000       \\

   \hline
\end{tabular}
\end{table}\vspace{1mm}

\begin{table}[htbp]
\centering\small
\caption{
The errors $E_{l^2(J;H,H)}$ and the convergence orders of the constant step-size BDF2 (CSBDF2) method and the variable step-size BDF2 (VSBDF2) method with $\varpi=3$ for problem (\ref{eq6.1a}). Upper: The backward Euler method for the starting value $U^1$; Bottom: The trapezoidal formula for the starting value $U^1$.}
\begin{tabular}{|c|cc|cc|}\hline
 &  CSBDF2&   & VSBDF2 &($\varpi=3$)\\
\cline{2-3}\cline{4-5}
$N$     & Error      &Order  & Error&Order   \\\hline
20           & 1.2944E-03   &              & 1.2050E-04   &               \\
40           & 5.5973E-04   & 1.2095       & 3.0922E-05   & 1.9623        \\
80           & 2.2095E-04   & 1.3410       & 7.8074E-06   & 1.9857       \\
160          & 8.3180E-05   & 1.4094       & 1.9601E-06   & 1.9939       \\
320          & 3.0512E-05   & 1.4469       & 4.9096E-07   & 1.9972        \\
   \hline
20           & 1.9409E-04   &              & 1.2038E-04   &              \\
40           & 6.6528E-05   & 1.5447       & 3.0920E-05   & 1.9610       \\
80           & 2.0180E-05   & 1.7210       & 7.8074E-06   & 1.9856       \\
160          & 5.6362E-06   & 1.8401       & 1.9601E-06   & 1.9939       \\
320          & 1.4964E-06   & 1.9132       & 4.9096E-07   & 1.9972       \\

   \hline
\end{tabular}
\end{table}\vspace{1mm}

\begin{table}[htbp]
\centering\small
\caption{
The errors $E_{l^\infty(J;H)}$ and the convergence orders of the constant step-size BDF2 (CSBDF2) method and the variable step-size BDF2 (VSBDF2) method with $\varpi=3$ for problem (\ref{eq6.1a}). Upper: The backward Euler method for the starting value $U^1$; Bottom: The trapezoidal formula for the starting value $U^1$.}
\begin{tabular}{|c|cc|cc|}\hline
 &  CSBDF2&   & VSBDF2 &($\varpi=3$)\\
\cline{2-3}\cline{4-5}
$N$     & Error      &Order  & Error&Order   \\\hline
20           & 1.2422E-03   &              & 1.6782E-04
  &               \\
40           & 4.7773E-04   & 1.3786       & 4.2028E-05
  & 1.9975       \\
80           & 1.6345E-04   & 1.5473       & 1.0524E-05
 & 1.9977      \\
160          & 5.4688E-05   & 1.5796       & 2.6306E-06
 & 2.0002      \\
320          & 1.6031E-05   & 1.7704       & 6.5767E-07
  & 2.0000        \\
   \hline
20           & 1.9771E-04   &              & 1.6782E-04   &              \\
40           & 5.3908E-05   & 1.8748       & 4.2028E-05
& 1.9975        \\
80           & 1.4027E-05   & 1.9423       & 1.0524E-05
& 1.9977        \\
160          & 3.5457E-06   & 1.9841       & 2.6306E-06
& 2.0002        \\
320          & 8.9028E-07   & 1.9937        &6.5767E-07
 & 2.0000        \\

   \hline
\end{tabular}
\end{table}\vspace{1mm}

\begin{table}[htbp]
\centering\small
\caption{
The errors $E_{l^2(J;V)}$ and the convergence orders of the constant step-size BDF2 (CSBDF2) method and the variable step-size BDF2 (VSBDF2) method with $\varpi=3$ for problem (\ref{eq6.1a}). Upper: The backward Euler method for the starting value $U^1$; Bottom: The trapezoidal formula for the starting value $U^1$.}
\begin{tabular}{|c|cc|cc|}\hline
 &  CSBDF2&   & VSBDF2 &($\varpi=3$)\\
\cline{2-3}\cline{4-5}
$N$     & Error      &Order  & Error&Order   \\\hline
20           & 2.0462E-03   &              & 7.6569E-04   &               \\
40           & 6.6766E-04   & 1.6158       & 1.9041E-04   & 2.0077        \\
80           & 2.0084E-04   & 1.7331       & 4.7440E-05   & 2.0049       \\
160          & 5.6422E-05   & 1.8317       & 1.1838E-05   & 2.0027       \\
320          & 1.5087E-05   & 1.9030       & 2.9566E-06   & 2.0014        \\
   \hline
20           & 5.8899E-04   &              & 7.6569E-04   &              \\
40           & 1.5458E-04   & 1.9299       & 1.9041E-04   & 2.0077       \\
80           & 3.9569E-05   & 1.9659       & 4.7440E-05   & 2.0049       \\
160          & 1.0003E-05   & 1.9839       & 1.1838E-05   & 2.0027       \\
320          & 2.5139E-06   & 1.9924       & 2.9566E-06   & 2.0014       \\

   \hline
\end{tabular}
\end{table}\vspace{1mm}
For CSBDF2 method, however, from these tables we observe that the errors of CSBDF2 with the starting BE step (\ref{eq2.9}) are larger than those of CSBDF2 with the starting TF step (\ref{eq2.4}) in all these norms. The order reduction phenomena are also observed in discrete $l^\infty(J;V)$ (Table 6.1) and $l^2(J;H,H)$ (Table 6.2) norms, especially, in discrete $l^2(J;H,H)$ norm, the order is less than $1.5$. These theoretical and numerical results presented in this paper suggest that for CSBDF2 method the backward Euler (BE) scheme (\ref{eq2.9}) is not the best choice.
\subsection{Experiment 2: nonlinear case}
In the second experiment we consider the $2$D semilinear equation,
\begin{eqnarray}\label{eq6.7}
u_t= \varepsilon(u_{xx}+u_{yy}) + u - u^3+g(t,x,y),\quad (x,y)\in [0,1]\times [0,1],\quad t\in (0,T],
\end{eqnarray}
with periodic boundary conditions. The function $g(t,x,y)$ and the initial value are selected in such a way that the exact solution is
$$u(t,x,y)=\sin(2\pi x)\cos(2\pi y)\exp(-\pi^2t).$$
It is easy to verify that the function $f(t,x,y,u)=u-u^3+g(t,x,y)$ satisfies condition (\ref{eq5.2a}) (see, e.g., \cite{Henry81,Czaja11}). We note that if $g\equiv 0$, equation (\ref{eq6.7}) is just the famous Allen-Cahn equation \cite{Allen79}, also known as the Chafee-Infante equation \cite{Chafee74}.

We consider a pseudo-spectral method for space discretization with $\Delta x=\Delta y=\frac{1}{M}$, where $M=256$. We let $T=1$ and $\varepsilon=0.01$. The numerical results are presented in Tables 6.5, 6.6, 6.7 and 6.8. For VSBDF2 method, the correct order of convergence is observed for all quantities. For CSBDF2 method, the order reduction phenomena in discrete $l^\infty(J;V)$ and $l^2(J;H,H)$ norms are still observed. We also notice that the CSBDF2 method with the starting TF step (\ref{eq2.4}) has much higher accuracy than the CSBDF2 method with the starting BE step (\ref{eq2.9}). To clearly illustrate this, the time evolutions of the discrete $L^2$ and $H^1$ errors are shown in Fig. 6.3.

\begin{table}[htbp]
\centering\small
\caption{
The errors $E_{l^\infty(J;V)}$ and the convergence orders of the constant step-size BDF2 (CSBDF2) method and the variable step-size BDF2 (VSBDF2) method with $\varpi=3$ for problem (\ref{eq6.7}). Upper: The backward Euler method for the starting value $U^1$; Bottom: The trapezoidal formula for the starting value $U^1$.}
\begin{tabular}{|c|cc|cc|}\hline
 &  CSBDF2&   & VSBDF2 &($\varpi=3$)\\
\cline{2-3}\cline{4-5}
$N$     & Error      &Order  & Error&Order   \\\hline
20           & 4.4379E-01   &              & 2.4029E-01   &               \\
40           & 1.3819E-01   & 1.6832        & 5.6710E-02   & 2.0831         \\
80           & 4.0324E-02   & 1.7770        & 1.3738E-02   & 2.0455        \\
160          & 1.1033E-02   & 1.8699        & 3.3798E-03   & 2.0231        \\
320          & 2.9251E-03   & 1.9152        & 8.3819E-04   & 2.0116         \\
   \hline
20           & 3.5795E-01   &              & 2.4035E-01   &              \\
40           & 9.7379E-02   & 1.8781        & 5.6711E-02   & 2.0834        \\
80           & 2.5390E-02   & 1.9393        & 1.3738E-02   & 2.0455        \\
160          & 6.4821E-03   & 1.9698        & 3.3798E-03   & 2.0231        \\
320          & 1.6375E-03   & 1.9849        & 8.3819E-04   & 2.0116        \\

   \hline
\end{tabular}
\end{table}\vspace{1mm}

\begin{table}[htbp]
\centering\small
\caption{
The errors $E_{l^2(J;H,H)}$ and the convergence orders of the constant step-size BDF2 (CSBDF2) method and the variable step-size BDF2 (VSBDF2) method with $\varpi=3$ for problem (\ref{eq6.7}). Upper: The backward Euler method for the starting value $U^1$; Bottom: The trapezoidal formula for the starting value $U^1$.}
\begin{tabular}{|c|cc|cc|}\hline
 &  CSBDF2&   & VSBDF2 &($\varpi=3$)\\
\cline{2-3}\cline{4-5}
$N$     & Error      &Order  & Error&Order   \\\hline
20           & 2.6970E-02   &              & 2.4749E-02   &               \\
40           & 1.4766E-02   & 0.8691       & 5.9608E-03   & 2.0538         \\
80           & 6.5878E-03   & 1.1644       & 1.4530E-03   & 2.0365        \\
160          & 2.6362E-03   & 1.3214       & 3.5813E-04   & 2.0205        \\
320          & 9.9438E-04   & 1.4066       & 8.8866E-05   & 2.0108         \\
   \hline
20           & 4.3060E-02   &              & 2.4750E-02   &              \\
40           & 1.2982E-02   & 1.7298       & 5.9608E-03   & 2.0538        \\
80           & 3.5827E-03   & 1.8574       & 1.4530E-03   & 2.0365        \\
160          & 9.4271E-04   & 1.9262       & 3.5813E-04   & 2.0205        \\
320          & 2.4192E-04   & 1.9623       & 8.8866E-05   & 2.0108        \\

   \hline
\end{tabular}
\end{table}\vspace{1mm}

\begin{table}[htbp]
\centering\small
\caption{
The errors $E_{l^\infty(J;H)}$ and the convergence orders of the constant step-size BDF2 (CSBDF2) method and the variable step-size BDF2 (VSBDF2) method with $\varpi=3$ for problem (\ref{eq6.7}). Upper: The backward Euler method for the starting value $U^1$; Bottom: The trapezoidal formula for the starting value $U^1$.}
\begin{tabular}{|c|cc|cc|}\hline
 &  CSBDF2&   & VSBDF2 &($\varpi=3$)\\
\cline{2-3}\cline{4-5}
$N$     & Error      &Order  & Error&Order   \\\hline
20           & 4.9904E-02   &              & 2.7042E-02
  &               \\
40           & 1.5543E-02   & 1.6829       & 6.3821E-03
  & 2.0831        \\
80           & 4.5355E-03   & 1.7769       & 1.5460E-03
 & 2.0455       \\
160          & 1.2413E-03   & 1.8694       & 3.8036E-04
 & 2.0231       \\
320          & 3.2914E-04   & 1.9151       & 9.4329E-05
  & 2.0116         \\
   \hline
20           & 4.0284E-02   &              & 2.7049E-02   &              \\
40           & 1.0959E-02   & 1.8781       & 6.3822E-03
& 2.0834         \\
80           & 2.8574E-03   & 1.9393       & 1.5460E-03
& 2.0455         \\
160          & 7.2949E-04   & 1.9698       & 3.8036E-04
& 2.0231         \\
320          & 1.8429E-04   & 1.9849        &9.4329E-05
 & 2.0116         \\

   \hline
\end{tabular}
\end{table}\vspace{1mm}

\begin{table}[htbp]
\centering\small
\caption{
The errors $E_{l^2(J;V)}$ and the convergence orders of the constant step-size BDF2 (CSBDF2) method and the variable step-size BDF2 (VSBDF2) method with $\varpi=3$ for problem (\ref{eq6.7}). Upper: The backward Euler method for the starting value $U^1$; Bottom: The trapezoidal formula for the starting value $U^1$.}
\begin{tabular}{|c|cc|cc|}\hline
 &  CSBDF2&   & VSBDF2 &($\varpi=3$)\\
\cline{2-3}\cline{4-5}
$N$     & Error      &Order  & Error&Order   \\\hline
20           & 3.8263E-01   &              & 2.4029E-01
      &               \\
40           & 1.0965E-01   & 1.8030        & 5.6710E-02   & 2.0831         \\
80           & 2.9232E-02   & 1.9073        & 1.3738E-02   & 2.0455        \\
160          & 7.5354E-03   & 1.9558        & 3.3798E-03   & 2.0231        \\
320          & 1.9119E-03   & 1.9786        & 8.3819E-04   & 2.0116         \\
   \hline
20           & 3.5795E-01   &              & 2.4035E-01   &              \\
40           & 9.7379E-02   & 1.8781        & 5.6711E-02   & 2.0834        \\
80           & 2.5390E-02   & 1.9393        & 1.3738E-02   & 2.0455        \\
160          & 6.4821E-03   & 1.9698        & 3.3798E-03   & 2.0231        \\
320          & 1.6375E-03   & 1.9849        & 8.3819E-04   & 2.0116        \\

   \hline
\end{tabular}
\end{table}\vspace{1mm}


\begin{figure}
\centering
\subfigure[Discrete $L^2$ errors, $N=320$, $M=256$]{ \label{fig:subfig:a}
\includegraphics[width=3in]{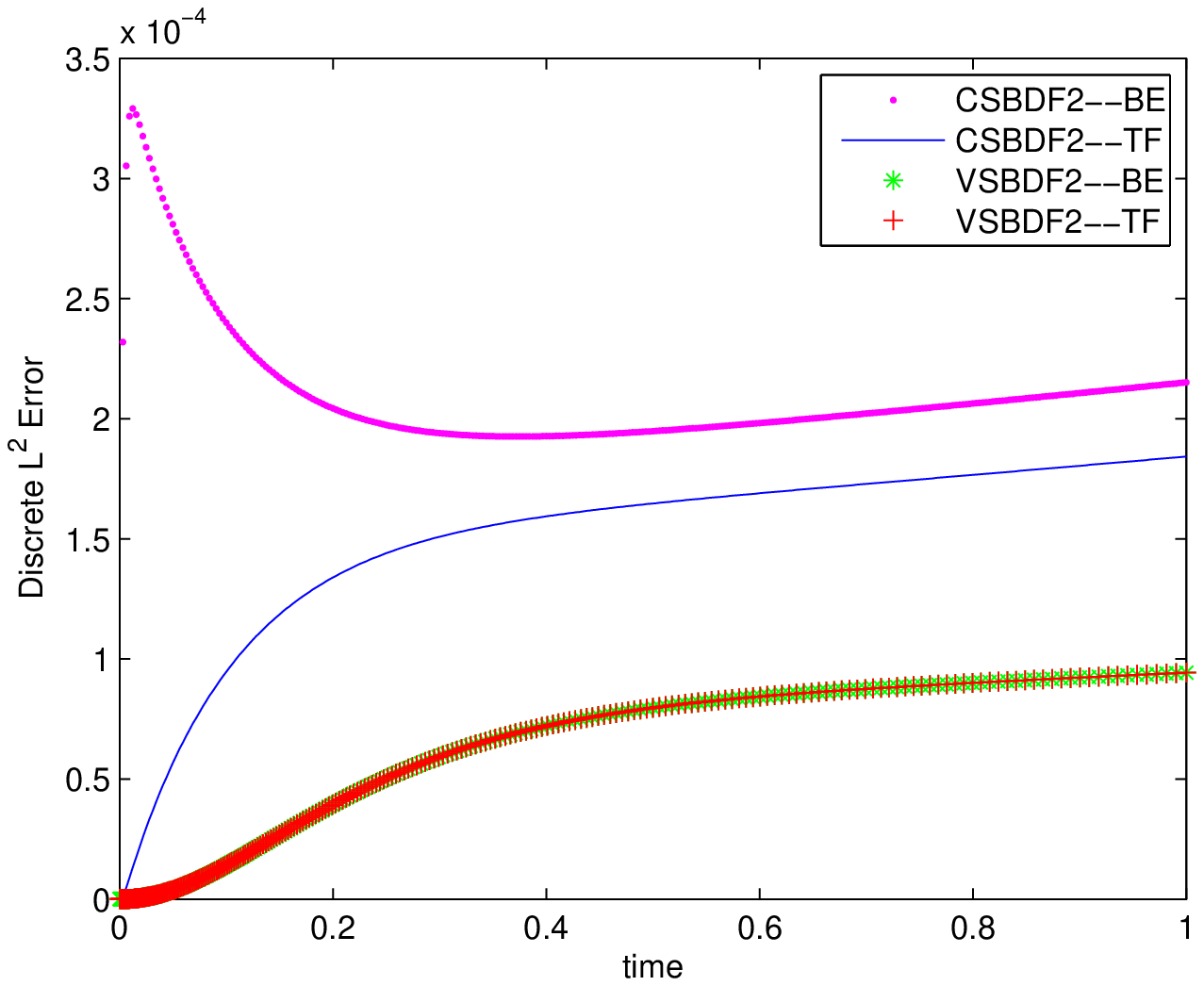}}
\hspace{0.01in}
\subfigure[Discrete $H^1$ errors, $N=320$, $M=256$]{ \label{fig:subfig:b}
 \includegraphics[width=3in]{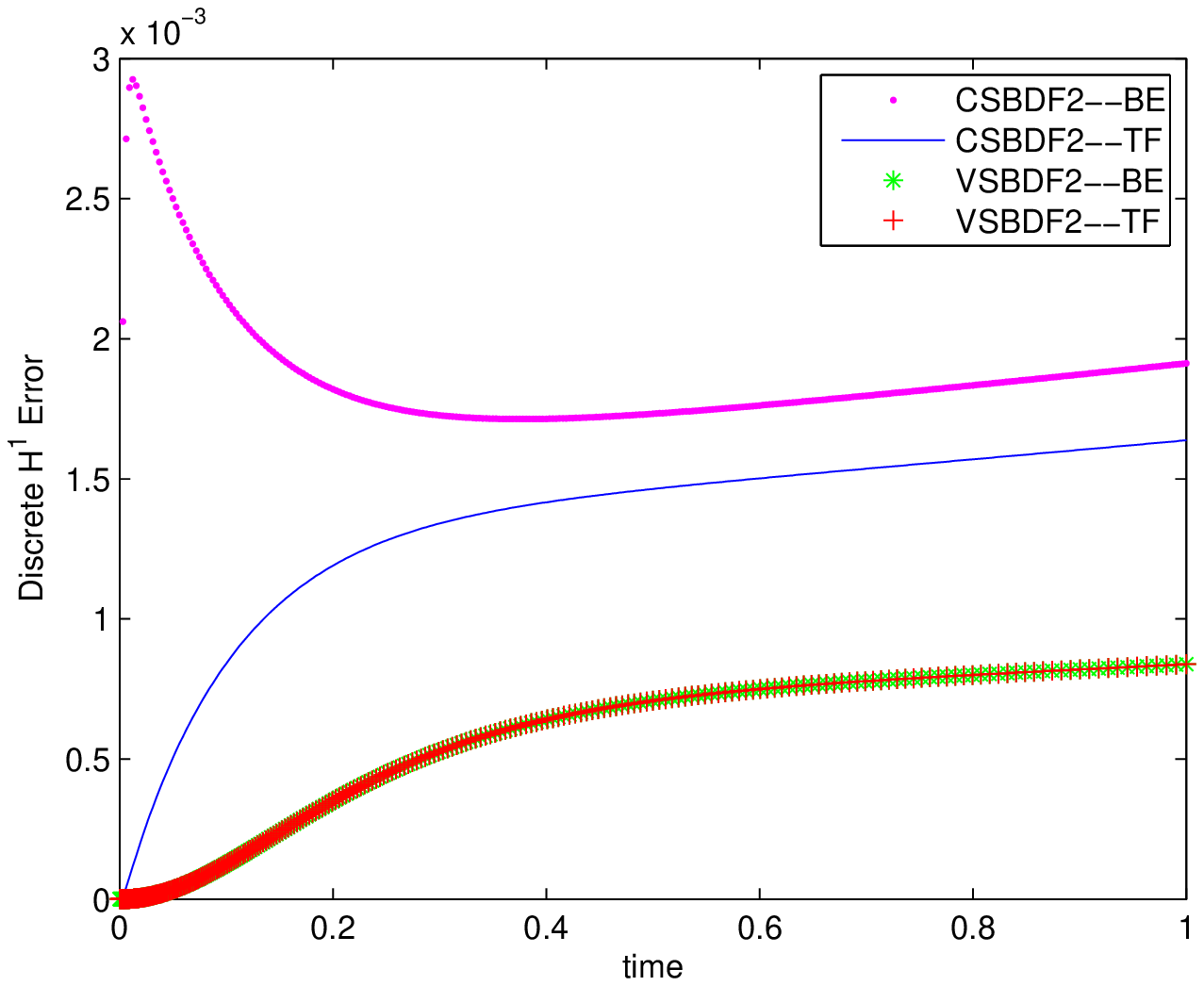}}
 \caption{Time evolution of the discrete errors of the constant step-size BDF2 (CSBDF2) method and the variable step-size BDF2 (VSBDF2) method ($\varpi=3$) for problem (\ref{eq6.7}), where $N=320$, $M=256$.} \label{fig:subfig}
 \end{figure}
\section{Concluding remarks}
In this work we considered the stability and error estimates of the variable step-size BDF2 method applied to linear and semilinear parabolic equations. We first obtained the $l^\infty(J;V)$ and $l^2(J;H,H)$-stabilities of the variable step-size BDF2 method for linear problems under the condition that the ratios of consecutive step-sizes are less than $R_0=\sqrt{2}+1\approx 2.414$, which is the zero stability condition. Then, using the $l^2(J;H,H)$-stability estimate, we proved that the upper bound of the step-size ratios for the $l^\infty(J;H)$-stability of the variable step-size BDF2 method for linear and semilinear parabolic equations is identical with the upper bound for the zero-stability for the first time. The bound of the step-size ratios can be also improved to $R_1=(3+\sqrt{17})/2\approx 3.561$ for the $l^\infty(J;V)$ and $l^2(J;V)$-stabilities. Based on the stability analysis and consistency error analysis, we derived global error bounds for the variable step-size BDF2 method in $l^\infty(J;V)$, $l^2(J;H,H)$, $l^\infty(J;H)$, and $l^2(J;V)$ norms. Since the variable step-size BDF2 method allows us take different time step-sizes for different time scales, i.e., small time step-sizes for the time domain with solution rapidly varying and large for the time domain with solution slowly changing, it demonstrates the prominent advantages of high accuracy compared to the constant step-size BDF2 method. To utilize the BDF method the trapezoidal method and the backward Euler scheme are employed to compute the starting value $U^1$. For the latter choice, order reduction phenomenon of the constant step-size BDF2 method is observed theoretically and numerically in the $l^\infty(J;V)$ and $l^2(J;H,H)$ norms. However, for the variable step-size BDF2 with the starting backward Euler step, the order reduction can be avoided by choosing a smaller starting step-size $k_1$.

We have implemented two numerical experiments for the variable step-size BDF2 method for linear and semilinear parabolic equations. For both equations these experiments exactly verify the theoretical results. The second order accuracy is maintained for time variable grid. These numerical experiments suggest that the variable step-size BDF2 method is more accurate than the popular constant step-size BDF2 method in several norms.


\end{document}